\theoremstyle{definition}
\newtheorem{definition}{Definition}[section]
\newtheorem{remark}[definition]{Remark}
\newtheorem{example}[definition]{Example}
\newtheorem{examples}[definition]{Examples}
\theoremstyle{plain}
\newtheorem{theorem}[definition]{Theorem}
\newtheorem{lemma}[definition]{Lemma}
\newcommand{\bra}{\ensuremath{\big\langle}}
\newcommand{\ket}{\ensuremath{\big\rangle}}
\newcommand{\supp}{\ensuremath{{\rm{supp}}}}
\newcommand{\card}{\ensuremath{{\rm{card}}}}
\newcommand{\st}{\ensuremath{{{\rm{st}}}}}
\newcommand{\emb}{\ensuremath{\hookrightarrow}}
\newcommand{\rest}{\!\!\ensuremath{\upharpoonright}\!}
\newcommand{\ifff}{\ensuremath{{\rm{\,iff\;}}}}
\newlist{T-enum}{enumerate}{2}
\newlist{L-enum}{enumerate}{2}
\newlist{C-enum}{enumerate}{2}
\newlist{P-enum}{enumerate}{2}  
\newlist{Pf-enum}{enumerate}{2} 
\newlist{D-enum}{enumerate}{2}
\newlist{Ex-enum}{enumerate}{2}
\newlist{Exs-enum}{enumerate}{2}
\newlist{E-enum}{enumerate}{2}
\newlist{R-enum}{enumerate}{2}
\setlist[T-enum,1]{label=(\roman*),format=\bfseries\emph,leftmargin=*,labelindent=.1\parindent}
\setlist[T-enum,2]{label=(\alph*),format=\bfseries\emph,leftmargin=*,labelindent=.1\parindent}
\setlist[L-enum,1]{label=(\roman*),format=\bfseries\emph,leftmargin=*,labelindent=.1\parindent}
\setlist[L-enum,2]{label=(\alph*),format=\bfseries\emph,leftmargin=*,labelindent=.1\parindent}
\setlist[C-enum,1]{label=(\roman*),format=\bfseries\emph,leftmargin=*,labelindent=.1\parindent}
\setlist[C-enum,2]{label=(\alph*),format=\bfseries\emph,leftmargin=*,labelindent=.1\parindent}
\setlist[P-enum,1]{label=(\roman*),format=\bfseries\emph,leftmargin=*,labelindent=.1\parindent}
\setlist[P-enum,2]{label=(\alph*),format=\bfseries\emph,leftmargin=*,labelindent=.1\parindent}
\setlist[Pf-enum,1]{label=(\roman*), leftmargin=*,labelindent=.1\parindent}
\setlist[Pf-enum,2]{label=(\alph*), leftmargin=*,labelindent=.1\parindent}
\setlist[D-enum,1]{label=\textbf{\arabic*.},leftmargin=*,labelindent=.2\parindent}
\setlist[D-enum,2]{label=\textbf{(\alph*)},leftmargin=*,labelindent=.1\parindent}
\setlist[Ex-enum,1]{label=\textbf{\arabic*.},leftmargin=*,labelindent=.15\parindent}
\setlist[Exs-enum,1]{label=\textbf{\arabic*.},leftmargin=*,labelindent=.15\parindent}
\setlist[E-enum,1]{label=\textbf{\arabic*.},leftmargin=*,labelindent=.15\parindent}
\setlist[R-enum,1]{label=\textbf{\arabic*.},leftmargin=*,labelindent=.15\parindent}
\begin{document}
\title{Algebraic Approach to Colombeau Theory}
\author{Todor D. Todorov*} 
\address{Mathematics Department\\                
California Polytechnic State University\\
San Luis Obispo, California 93407, USA}
\email{ttodorov@calpoly.edu}
\thanks{*Partly supported by a FWF (Austrian Science Fund) project P 25311-N25: \emph{Non-Archimedean Geometry and Analysis}}

\keywords{Totally ordered field, real closed field, algebraically closed field, non-Archimedean field, infinitesimals, Cantor complete field, saturated field, valuation field, ultra-metric, Hahn-Banach theorem, monad, $\mathcal C^\infty$-function, Schwartz distributions, Colombeau algebra, Colombeau embedding}

\subjclass{Primary: 46F30; Secondary: 46S20, 46S10, 46F10, 03H05, 03C50}.
\date{September 2013}

\begin{abstract} We present a differential algebra of generalized functions over a field of generalized scalars by means of several axioms in terms of general algebra and topology. Our differential algebra is of Colombeau type in the sense that it contains a copy of the space of Schwartz distributions, and the set of regular distributions with $\mathcal C^\infty$-kernels forms a differential subalgebra. We discuss the uniqueness of the field of scalars as well as the consistency and independence of our axioms. This article is written mostly to satisfy the interest of mathematicians and scientists who do not necessarily belong to the \emph{Colombeau community}; that is to say, those who do not necessarily work in the \emph{non-linear theory of generalized functions}. 
\end{abstract}
\maketitle

\section{Introduction}

Our algebraic approach has \emph{three main goals}:
\begin{enumerate}

\item To \emph{improve the properties of the generalized scalars}: In our approach the set of scalars (the constant functions) of our algebra of generalized functions forms an algebraically closed Cantor complete field, not a ring with zero divisors as in the original Colombeau theory (Colombeau~\cite{jCol84a}-\cite{jCol91}, Oberguggenberger~\cite{mOber92}, Biagioni~\cite{aBiag90}).

\item To transfer \emph{more general theoretical tools} from functional analysis to Colombeau theory. In particular, the validity of the Hahn-Banach extension principle in our approach is a direct consequence of the improvement of the scalars (Todorov \& Vernaeve~\cite{TodVern08}, Section 8).

\item To improve the \emph{accessibility} of the theory outside the ``Colombeau Community''. Most of our axioms are algebraic in nature (hence, the title ``Algebraic Approach...'') and can be comprehended without preliminary knowledge of Colombeau theory. 
\end{enumerate}

	In our algebraic approach \emph{we follow the familiar examples of real and functional analysis}: The field of real numbers $\mathbb R$ is defined axiomatically; $\mathbb R$ is a \emph{totally ordered complete field}. These axioms characterize the field $\mathbb R$ uniquely - up to isomorphism. The existence of $\mathbb R$ is guaranteed by a construction of a model of $\mathbb R$ in terms of the rationals $\mathbb Q$. We supply $\mathbb R$ with the order topology. Next, we define the field of the complex numbers by $\mathbb C=\mathbb R(i)$ and supply $\mathbb C$  with the corresponding product topology. Let $\Omega$ be an open subset of $\mathbb R^d$. The space $\mathcal E(\Omega)=\mathcal C^\infty(\Omega)$ is consists of the functions $f: \Omega\to \mathbb C$ with continuous partial derivatives of any order.

 Similarly, in order to define the \emph{algebra of generalized functions} $\widehat{\mathcal E(\Omega)}$ (of Colombeau type) \emph{over the field of scalars} $\widehat{\mathbb C}$ first, we define the field of real generalized scalars $\widehat{\mathbb R}$ by several axioms which determine $\widehat{\mathbb R}$ uniquely up to a field isomorphism. Since $\widehat{\mathbb R}$ is a real closed (thus, totally ordered) field, we can topologize $\widehat{\mathbb R}$ with the order topology. The field of scalars in our approach is $\widehat{\mathbb C}=\widehat{\mathbb R}(i)$. Let $\Omega$ be an open set of $\mathbb R^d$. We extend $\Omega$ to its \emph{monad}
$
\widehat{\mu}(\Omega)=\big\{r+h: r\in\Omega,\; h\in{\widehat{\mathbb R}^d},\; |h|\approx 0\big\}$,
which is used as the common domain of the generalized functions. (Here $|h|\approx 0$ means that $|h|$ is infinitesimal.) Let $\mathcal C^\infty(\widehat{\mu}(\Omega),\; \widehat{\mathbb C})$ consists of all functions $f: \widehat{\mu}(\Omega)\to \widehat{\mathbb C}$ with continuous partial derivatives of all orders. The set $\mathcal C^\infty(\widehat{\mu}(\Omega),\; \widehat{\mathbb C})$ is a \emph{differential algebra over the field} $\widehat{\mathbb C}$, but it is too large for developing any basic calculus: neither the intermediate value theorem, nor the fundamental theory of calculus hold in $\mathcal C^\infty(\widehat{\mu}(\Omega),\; \widehat{\mathbb C})$. Next, we select a differential subalgebra $\widehat{\mathcal E(\Omega)}$ of $\mathcal C^\infty(\widehat{\mu}(\Omega),\; \widehat{\mathbb C})$ with several additional axioms including the \emph{mean value theorem} (treated as an axiom) and the existence of a \emph{Colombeau type of embedding} $E_\Omega: \mathcal D^\prime(\Omega)\to \widehat{\mathcal E(\Omega)}$ of the space of Schwartz distributions  $\mathcal D^\prime(\Omega)$ (Vladimirov~\cite{vVladimirov}). Thus $\widehat{\mathcal E(\Omega)}$ converts to an \emph{algebra of generalized functions of Colombeau type} (Colombeau~\cite{jCol84a}-\cite{jCol91}, Oberguggenberger~\cite{mOber92}, Biagioni\cite{aBiag90}). The consistency of our axioms is proved by models - one in the framework of Robinson non-standard analysis (Oberguggenberger \& Todorov~\cite{OberTod98}) and another in the framework of standard analysis (Todorov \& Vernaeve~\cite{TodVern08}). At the end of the article we also discuss the \emph{partial independence} of our axioms.

	This article is an improved and simplified version of the axiomatic approach in (Todorov~\cite{tdTodorovAxioms11}). Our set-theoretical framework is the usual ZFC axioms in set theory together with the axiom: $2^\mathfrak{c}=\mathfrak{c}^+$, where $\mathfrak{c}=\card(\mathbb{R})$ (known as \emph{General Continuum Hypothesis} or GCH for short). Here $\mathfrak{c}^+$ stands for the cardinal number which is the successor of $\mathfrak{c}$. The only role of the GCH is to guarantee the uniqueness of the field of scalars $\widehat{\mathbb C}$; the readers who are not particularly concerned about the uniqueness of $\widehat{\mathbb C}$ might decide to ignore this axiom.
	
	Let $\Omega$ be an open subset of $\mathbb R^d$. We denote by $\mathcal E(\Omega)$ the differential ring of the $\mathcal C^\infty$-functions from $\Omega$ to $\mathbb C$. We denote by $\mathcal D(\Omega)$ the space of all functions in $\mathcal E(\Omega)$ with compact support in $\Omega$. Next, $\mathcal D^\prime(\Omega)$ stands for the space of Schwartz distributions on $\Omega$.  We denote by $\mathcal L_{loc}(\Omega)$ the set of the locally integrable (by Lebesgue) functions from $\Omega$ to $\mathbb C$. Finally, we denote by $S_\Omega:\mathcal L_{loc}(\Omega)\to\mathcal D^\prime(\Omega)$ the \emph{Schwartz embedding} defined by  $\bra S_\Omega(f), \tau\ket=\int_\Omega\, f(x)\tau(x)\, dx$ for all  $\tau\in\mathcal D(\Omega)$. Our notation is close to (Vladimirov~\cite{vVladimirov}).

\section{Notational Bridge to Colombeau Theory}\label{S: Notational Bridge to Colombeau Theory}

	This article does not necessarily require a background in Colombeau's theory (known also as a ``non-linear theory of generalized functions''). However, for those who already are familiar with Colombeau~\cite{jCol84a}-\cite{jCol91} theory, we present a notational comparison which might facilitate the reading of the rest of the article. Notice that on the right hand side of the list below the symbols are borrowed either from \emph{Robinson's non-standard analysis} (associated with the field $^*\mathbb R$ of non-standard real numbers: Robinson~\cite{aRob66}, Lindstr\o m~\cite{tLin}, Cavalcante~\cite{rCav}) \emph{or} \emph{Robinson non-standard asymptotic analysis} (associated with the field $^\rho\mathbb R$ of real asymptotic numbers: Robinson~\cite{aRob73},  Lightstone \& Robinson~\cite{LiRob},  Luxemburg~\cite{wLux}, Pestov~\cite{vPestov}, Todorov \& Wolf~\cite{TodorovWolf}). No background in non-standard analysis, nor in non-standard asymptotic analysis is expected from the reader; some readers might decide to skip this section altogether and proceed with the next. Those who browse through the list of comparisons should not take the comparison literary: for example, we look upon $^*\mathbb R$ as a \emph{refinement of} $\mathbb R^{(0, 1]}$ (similarly, say, the Lebesgue integral is a \emph{refinement} of the Riemann integral).
\begin{enumerate}
\item $\mathbb R^{(0, 1]}\quad  >>>>>>>>>>\; ^*\mathbb R$.
\item $\widetilde{\mathbb R}\quad \quad\; >>>>>>>>>\quad {^\rho\mathbb R}=\widehat{\mathbb R}$.
\item $\varepsilon\quad \quad\quad >>>>>>>>>\quad \rho$. 
\item $[\varepsilon]\quad \quad\quad >>>>>>>>>\quad \widehat{\rho}=s$.
\item $\widetilde{\mathbb C} \quad \quad\; >>>>>>>>>>\quad \widehat{\mathbb C}=\widehat{\mathbb R}(i)$.
\item $\widetilde{\Omega}_c \quad \quad\; >>>>>>>>>>\quad \widehat{\mu}(\Omega)=\big\{r+h: r\in\Omega, h\in\widehat{\mathbb R}^d,\; |h|\approx0\big\}$.

\item $\mathcal C^\infty(\Omega) \quad \; >>>>>>>>>>\quad  \mathcal E(\Omega)=\mathcal C^\infty(\Omega)$.
\item $\mathcal C^\infty(\widehat{\Omega}_c,\; \widetilde{\mathbb C}) \quad \; >>>>>>>>>>\quad \mathcal C^\infty(\widehat{\mu}(\Omega),\; \widehat{\mathbb C})$.
\item $\mathcal G(\Omega) \quad \; >>>>>>>>>>\quad \widehat{\mathcal E(\Omega)}$.

\item $\mathcal G^\infty(\Omega) \quad \; >>>>>>>>>>\quad \mathcal R^\infty(\Omega)$.

\item $\mathcal G^0(\Omega) \quad \; >>>>>>>>>>\quad \widehat{\mathcal F(\Omega)}$.
\end{enumerate}
\section{Background from Algebra}\label{S: Background from Algebra}

	We review briefly some basic definitions from algebra related to totally ordered (real) fields. We also present several examples of totally ordered \emph{non-Archimedean fields} (fields with non-zero infinitesimals). We complete this section with two uniqueness results about totally ordered fields.

	For general references on the topic we refer to: Dales \& Woodin~\cite{DalWoodin}, Lang (\cite{sLang}, Chapter XI), Waerden~(\cite{VanDerWaerden}, Chapter 11), Zariski \& Pierre Samuel~\cite{ZarSam}. For references about valuation fields we refer to: Bourbaki (\cite{nBourbaki90} Chapter VI), Lightstone~\&~Robinson~(\cite{LiRob}, Chapter 1) and Ribenboim~\cite{pRib}. 

	Readers without preliminary experience with \emph{infinitesimals} (beyond the context of \emph{history of calculus}) are strongly encouraged to spend some time with one of our simple examples of non-Archimedean fields - say the field $\mathbb R(t^\mathbb Z)$ of Laurent series (below) and try to become familiar with the concept of \emph{non-zero infinitesimal} elements (treated here as \emph{generalized numbers}) before proceeding to the next section. For those who are experiencing philosophical doubts about the mere \emph{rights of the infinitesimals to exist}, we should mention that every totally ordered field - which contains a proper copy of $\mathbb R$ - must also contain non-zero infinitesimals.  

	Here is our brief excursion into algebra: 
 
\begin{enumerate}
\item A field $\mathbb K$ is \textbf{orderable} if there exists a subset $\mathbb K_+$ of $\mathbb K\setminus\{0\}$ which is closed under addition and multiplication in $\mathbb K$, and is such that, for every $x\in\mathbb K\setminus\{0\}$  either $x\in\mathbb K_+$, or $-x\in\mathbb K_+$. We should mention that a field $\mathbb K$ is orderable \ifff $\mathbb K$ is \textbf{(formally) real} in the sense that for every $n\in\mathbb N$ the equation $\sum_{i=1}^n x^2_i=0$ only admits the trivial solution $x_1=x_2=\dots=x_n=0$ in $\mathbb K^n$. If $\mathbb K$ is orderable, then every set $\mathbb K_+$ defines an \textbf{order relation} on $\mathbb K$ by: $x<y$ if $y-x\in\mathbb K_+$. In this case, we refer to $\mathbb K$ as a \textbf{totally ordered field} and define the absolute value by $|x|=\max\{x, -x\}$. Notice that every totally ordered field contains a copy of the field of rational numbers $\mathbb Q$.
\begin{example}[Non-Orderable] The field of complex numbers $\mathbb C$ and the field of real $p$-adic numbers $\mathbb Q_p$ (Ingleton~\cite{wIngleton}) are both \textbf{non-orderable}. 
\end{example}
 
\item Let $\mathbb K$ be a \textbf{totally ordered field}. We denote by
\begin{align}
& \mathcal I(\mathbb K)=\big\{x\in\mathbb K: |x|<1/n \text{ for all } n\in\mathbb N \big\},\notag\\
& \mathcal F(\mathbb K)=\big\{x\in\mathbb K: |x|\leq n \text{ for some } n\in\mathbb N \big\},\notag\\
& \mathcal L(\mathbb K)=\big\{x\in\mathbb K: n<|x|\text{ for all } n\in\mathbb N \big\},\notag
\end{align}
the sets of \textbf{infinitesimal, finite and infinitely large} elements in $\mathbb K$, respectively. It is customary to write $x\approx 0$ instead of $x\in\mathcal I(\mathbb K)$.  We have $\mathcal F(\mathbb K)\cup\mathcal L(\mathbb K)=\mathbb K$,  $\mathcal F(\mathbb K)\cap\mathcal L(\mathbb K)=\varnothing$, $0\in\mathcal I(\mathbb K)\subset\mathcal F(\mathbb K)$, $\mathbb Q\subseteq\mathcal F(\mathbb K)$ and $\mathbb Q\cap\mathcal I(\mathbb K)=\{0\}$. Also, if $x\in\mathbb K\setminus\{0\}$, then $x\in\mathcal I(\mathbb K)$ \ifff $1/x\in\mathcal L(\mathbb K)$. Finally, $\mathcal F(\mathbb K)$ is an integral domain and $\mathcal I(\mathbb K)$ is its unique convex maximal ideal. Consequently, the residue ring $^{\st}\mathbb K=:\mathcal F(\mathbb K)/\mathcal I(\mathbb K)$ is a totally ordered subfield of $\mathbb R$. We denote by $\st_\mathbb K:\mathcal F(\mathbb K)\to {^{\st}\mathbb K}$ the canonical homomorphism and observe that $\st_\mathbb K$ preserves the order in the sense that $x<y$ implies $\st_\mathbb K(x)\leq\st_\mathbb K(y)$. If $^{\st}\mathbb K$ is a subfield of $\mathbb K$, we refer to the mapping $\st_\mathbb K: \mathcal F(\mathbb K)\to\mathbb K$, as a \textbf{standard part mapping}. 

\item A field $\mathbb K$ is \textbf{Archimedean} if\,  $\mathcal I(\mathbb K)=\{0\}$ (or, equivalently, if $\mathbb K=\mathcal F(\mathbb K)$ or if $\mathcal L(\mathbb R)=\varnothing$). Otherwise it is called \textbf{non-Archimedean}.

\begin{example}[Archimedean Fields] $\mathbb R$ is Archimedean, because $\mathcal I(\mathbb R)=\{0\}$. We also have $\mathcal F(\mathbb R)=\mathbb R$ and $\mathcal L(\mathbb R)=\varnothing$. Actually, a totally ordered field is Archimedean \ifff it is a subfield of $\mathbb R$. 
\end{example}
\item  Every totally ordered field $\mathbb K$ which contains a proper copy of $\mathbb R$ is non-Archimedean. For example, the \textbf{field $\mathbb R(t^\mathbb Z)$ of Laurent series} with real coefficients   is non-Archimedean. Here $\sum_{n=0}^\infty\;  a_n\, t^{m+n} >0 \text{ if } a_0>0$. Also, we define the embedding $\mathbb Q\emb\mathbb R(t^\mathbb Z)$ by $q\to qt^0+0 t+0 t^2+\dots$. The elements: $t, t^2, \dots t^n, t+t^2$, $\sum_{n=1}^\infty\, n!\, t^n$, etc. are non-zero infinitesimals; $5, 5+t, 5+t^2$, $\sum_{n=0}^\infty\, n!\, t^n$, etc. are finite, but not infinitesimal and $1/t, 1/t+5+t$, $\sum_{n=-1}^\infty\, n!\, t^n$, etc. are infinitely large. Let us show that, for example, $t$ is a positive infinitesimal in $\mathbb R(t^\mathbb Z)$. It is clear that $t>0$ since $a_0=1>0$. We have to show that that $t<1/n$ for all $n\in\mathbb N$. Indeed,  $1/n-t=(1/n)t^0+(-1)t+0 t^2+0 t^3+\dots>0$ since $a_0=1/n>0$. 

\begin{examples}[Non-Archimedean Fields] Here are several examples of non-Archimedean fields.
\begin{Ex-enum}
\item The field $\mathbb R(t)$ of rational functions with real coefficients.
\item The field $\mathbb R(t^\mathbb Z)$ of Laurent series with real coefficients (mentioned above).

\item The field $\mathbb R\bra t^\mathbb R\ket$ of the Levi-Civit\'{a} series with real coefficients (Levi-Civit\'{a}~\cite{tLeviCivita}).

\item  The field $\mathbb R((t^\mathbb R))$ of the Hahn series with real coefficients and valuation group $\mathbb R$ (Hahn~\cite{hHahn}). 

\item Any field $^*\mathbb R$ of Robinson non-standard real numbers (Robinson~\cite{aRob66}), 

\item Any field $^\rho\mathbb R$ of Robinson's asymptotic real numbers (Robinson~\cite{aRob73}).
\end{Ex-enum}
\end{examples} 

\item A field $\mathbb K$ (not necessarily ordered by presumption) is \textbf{real closed} if 
\begin{enumerate}
\item For every $a\in\mathbb K$ at least one of the equations $x^2=a$ or $x^2=-a$ has a solution in $\mathbb K$. 
\item Every polynomial $P\in\mathbb K[x]$ \emph{of odd degree} has a root in $\mathbb K$. 
\end{enumerate}
\begin{examples} $\mathbb R$, $\mathbb R\bra t^\mathbb R\ket$, $\mathbb R((t^\mathbb R))$, $^*\mathbb R$ and $^\rho\mathbb R$ are real closed. The fields $\mathbb R(t)$ and $\mathbb R(t^\mathbb Z)$ are not real closed. 
\end{examples}
\item Every real closed field $\mathbb K$ is \textbf{orderable in a unique way} by: $x\geq 0$ in $\mathbb K$ if $x=y^2$ for some $y\in\mathbb K$. 
\item A totally ordered field $\mathbb K$ is \emph{real closed} \ifff $\mathbb K(i)$ is \emph{algebraically closed}. 

\item Let $\kappa$ be an infinite cardinal and let $\mathbb K$ be a totally ordered field with $\card(\mathbb K)=\kappa^+$, where $\kappa^+$ is the successor of $\kappa$ (thus $\mathbb K\not=\mathbb Q$). Then:  
\begin{enumerate}
\item $\mathbb K$ is \textbf{Cantor complete} if every family $\{[a_\gamma, b_\gamma]\}_{\gamma\in\Gamma}$ of bounded closed intervals with the f.i.p. (finite intersection property) and $\card(\Gamma)\leq\kappa$ has a non-empty intersection $\bigcap_{\gamma\in\Gamma}\, [a_\gamma, b_\gamma]\not=\varnothing$.

\item $\mathbb K$ is \textbf{algebraically saturated} if every family $\{(a_\gamma, b_\gamma)\}_{\gamma\in\Gamma}$ of open intervals (bounded or not) with the f.i.p. and $\card(\Gamma)\newline\leq\kappa$ has a non-empty intersection $\bigcap_{\gamma\in\Gamma}\, (a_\gamma, b_\gamma)\not=\varnothing$. 
\end{enumerate}
	It is clear that every algebraically saturated field is also Cantor complete. Here are some examples.
\begin{examples}[Cantor Complete and Saturated] The fields $\mathbb R$ is \emph{Cantor complete} (assuming that $\card(\mathbb N))^+=\card(\mathbb R)$). The fields $^*\mathbb R$, $^\rho\mathbb R$ are also \emph{Cantor complete} (assuming that  $^*\mathbb R$ is a $\card(^*\mathbb R)$-saturated non-standard extension of $\mathbb R$ in the sense of the non-standard analysis, Lindstr\o m~\cite{tLin}). The fields $\mathbb R(t)$, $\mathbb R(t^\mathbb Z)$, $\mathbb R\bra t^\mathbb R\ket$ and $\mathbb R((t^\mathbb R))$ are \emph{not Cantor complete}. 
Also,  $^*\mathbb R$ is \emph{algebraically saturated}. The fields $\mathbb R$, $\mathbb R(t)$, $\mathbb R(t^\mathbb Z)$, $\mathbb R\bra t^\mathbb R\ket$, $\mathbb R((t^\mathbb R))$ and $^\rho\mathbb R$  are \emph{not saturated}. 
\end{examples}
\end{enumerate}

	The next result requires the generalized continuum hypothesis (GCH) $2^\kappa=\kappa^+$ in addition to the more conventional ZFC set-theoretical framework.

\begin{theorem}[First Uniqueness Result]\label{T: First Uniqueness Result} All real closed algebraically saturated fields of the same cardinality are isomorphic. Consequently, for every infinite cardinal there is unique, up to isomorphism, non-standard extension $^*\mathbb R$ of $\mathbb R$ such that $\card(^*\mathbb R)=\kappa^+$.
\end{theorem}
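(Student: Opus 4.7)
The plan is a classical back-and-forth construction. Given two real closed algebraically saturated fields $\mathbb{K}_1, \mathbb{K}_2$ of cardinality $\kappa^+$, I would enumerate $\mathbb{K}_1=\{a_\alpha:\alpha<\kappa^+\}$ and $\mathbb{K}_2=\{b_\alpha:\alpha<\kappa^+\}$ and build by transfinite recursion an increasing chain of ordered field isomorphisms $f_\alpha\colon A_\alpha\to B_\alpha$ between real closed subfields $A_\alpha\subseteq\mathbb{K}_1$ and $B_\alpha\subseteq\mathbb{K}_2$ of cardinality at most $\kappa$, arranging $a_\beta\in A_\alpha$ and $b_\beta\in B_\alpha$ for all $\beta<\alpha$. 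The union $f=\bigcup_\alpha f_\alpha$ is then the sought isomorphism $\mathbb{K}_1\to\mathbb{K}_2$.

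At limit stages I would take unions (the cardinality bound $\leq\kappa$ survives because $\kappa^+$ is regular and each $A_\alpha$ has cardinality at most $|\alpha|+\aleph_0\leq\kappa$). The successor step alternates between adjoining the next unused element of $\mathbb{K}_1$ to the domain and the next unused element of $\mathbb{K}_2$ to the range; by symmetry I consider the forward direction. Given $a:=a_\alpha\notin A_\alpha$, real closedness of $A_\alpha$ forces $a$ to be transcendental over $A_\alpha$, so $a$ determines a Dedekind cut $(L,U)$ of $A_\alpha$. I would then apply $f_\alpha$ to form the family of open intervals
\[
\bigl\{(f_\alpha(\ell),f_\alpha(u)):\ell\in L,\,u\in U\bigr\}\;\cup\;\bigl\{(f_\alpha(\ell),+\infty):\ell\in L\bigr\}\;\cup\;\bigl\{(-\infty,f_\alpha(u)):u\in U\bigr\}
\]
in $\mathbb{K}_2$, which has the finite intersection property (since any finite subintersection translates back to $\mathbb{K}_1$ where it contains $a$) and cardinality at most $|A_\alpha|\leq\kappa$. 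Algebraic saturation of $\mathbb{K}_2$ yields some $b\in\mathbb{K}_2$ in the intersection. A quick contrapositive using $f_\alpha^{-1}$ shows $b\notin B_\alpha$: if $b=f_\alpha(c)$ with $c\in A_\alpha=L\cup U$, then $c$ would realize the original cut and so satisfy $c<c$. Hence $b$ is transcendental over $B_\alpha$.

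Next I would extend $f_\alpha$ to the ring isomorphism $A_\alpha(a)\to B_\alpha(b)$ sending $a\mapsto b$, and then uniquely further to the real closures of $A_\alpha(a)$ and $B_\alpha(b)$ taken inside $\mathbb{K}_1$ and $\mathbb{K}_2$ respectively, using that the real closure of an ordered field is unique up to a unique ordered isomorphism over the base. The non-formal verification is that the map $A_\alpha(a)\to B_\alpha(b)$ really preserves order; this is where real closedness of the base is essential. Any nonzero $p\in A_\alpha[X]$ factors over the real closed field $A_\alpha$ into linear and irreducible quadratic factors, so every real root of $p$ in $\mathbb{K}_1$ already lies in $A_\alpha$. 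Consequently the sign of $p(a)$ depends only on which of the finitely many open intervals of $A_\alpha$ (between consecutive roots of $p$, plus the two unbounded tails) contains $a$, and this datum is encoded in the cut $(L,U)$. Running the identical analysis for $p^{f_\alpha}$ at $b$ in $\mathbb{K}_2$ yields $\mathrm{sgn}\,p(a)=\mathrm{sgn}\,p^{f_\alpha}(b)$, which delivers order-preservation.

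I expect the order-preservation check just sketched to be the main obstacle; the remaining steps are bookkeeping (cardinality control, transfinite recursion, unions at limits). The ``consequently'' clause follows immediately: any $\kappa^+$-saturated non-standard extension ${}^*\mathbb{R}$ of $\mathbb{R}$ with $\card({}^*\mathbb{R})=\kappa^+$ is real closed by transfer from $\mathbb{R}$, and algebraically saturated because open intervals in ${}^*\mathbb{R}$ are internal sets, so the first assertion applies.
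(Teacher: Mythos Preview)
Your back-and-forth argument is correct and is precisely the classical proof of this result. The paper, however, does not supply its own proof: it simply cites Erd\H{o}s, Gillman and Henriksen (1955), with a pointer to Gillman--Jerison for an exposition. What you have written is essentially the content of that reference, so you have filled in the details the paper chose to outsource; there is nothing to compare beyond that.
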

\begin{proof} We refer to Erd\"{o}s \&  Gillman \& Henriksen~\cite{ErdGillHenr55}
(for a presentation see also: Gillman \& Jerison~\cite{lFux63}, p. 179-185).
\end{proof}

	The next uniqueness result essentially involves the previous uniqueness result (and thus it also cannot  survive without the GCH: $2^\kappa=\kappa^+$).

\begin{theorem}[Second Uniqueness Result] \label{T: Second Uniqueness Result} 	Let $\kappa$ be an infinite cardinal and $\mathbb K$ be a totally ordered field with the following properties:
\begin{T-enum}

\item $\card(\mathbb K)=\kappa^+$.

\item $\mathbb K$ is \textbf{Cantor complete real closed field}. 

\item $\mathbb K$ \textbf{contains} $\mathbb R$ as a subfield.

\item $\mathbb K$ admits an \textbf{infinitesimal scale}, i.e. there exists a positive infinitesimal $s$ in $\mathbb K$ such that the sequence $(s^{-n})$ is unbounded (from above). 

\end{T-enum}
Then (for the fixed $\kappa$) the field $\mathbb K$ is unique up to field isomorphism.
\end{theorem}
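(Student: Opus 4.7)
The strategy is to reduce the Second Uniqueness Result to Theorem~\ref{T: First Uniqueness Result}. However, $\mathbb{K}$ itself cannot in general be algebraically saturated: condition~(iv) forces the cofinality of the positive part of $\mathbb{K}$ to be $\omega$ (via the cofinal sequence $(s^{-n})$), whereas an $\aleph_1$-saturated non-Archimedean field must have uncountable cofinality; the example $^\rho\mathbb{R}$ illustrates this gap explicitly. So a direct appeal to Theorem~\ref{T: First Uniqueness Result} is impossible, and instead I would embed $\mathbb{K}$ into a saturated extension and then recover $\mathbb{K}$ canonically as a ``moderate modulo negligible'' quotient relative to the scale $s$.

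Given two fields $\mathbb{K}_1, \mathbb{K}_2$ satisfying (i)--(iv) with infinitesimal scales $s_1, s_2$, I would proceed in three steps. First, embed each $\mathbb{K}_j$ as an ordered subfield into a real closed, algebraically $\kappa$-saturated extension $\mathbb{L}_j$ with $\card(\mathbb{L}_j) = \kappa^+$; existence is standard model theory given $2^\kappa = \kappa^+$. Second, inside $\mathbb{L}_j$ introduce
\[
\mathcal{M}_j = \{x \in \mathbb{L}_j : |x| \leq s_j^{-n} \text{ for some } n \in \mathbb{N}\}, \qquad \mathcal{N}_j = \{x \in \mathbb{L}_j : |x| \leq s_j^{n} \text{ for all } n \in \mathbb{N}\},
\]
obtaining a real closed quotient field $\widetilde{\mathbb{L}}_j := \mathcal{M}_j / \mathcal{N}_j$. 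By~(iv) we have $\mathbb{K}_j \subseteq \mathcal{M}_j$ and $\mathbb{K}_j \cap \mathcal{N}_j = \{0\}$ (a nonzero element of the intersection would have its reciprocal in $\mathbb{K}_j$ exceeding every $s_j^{-n}$, contradicting~(iv)), so the composition $\pi_j : \mathbb{K}_j \hookrightarrow \mathcal{M}_j \twoheadrightarrow \widetilde{\mathbb{L}}_j$ is an order-field embedding; the central claim is that $\pi_j$ is onto. Third, Theorem~\ref{T: First Uniqueness Result} supplies an order-isomorphism $\Phi: \mathbb{L}_1 \to \mathbb{L}_2$, and by a back-and-forth adjustment exploiting the homogeneity of saturated real closed fields one can pre-compose $\Phi$ with an automorphism of $\mathbb{L}_2$ to arrange $\Phi(s_1)=s_2$. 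Then $\Phi(\mathcal{M}_1)=\mathcal{M}_2$ and $\Phi(\mathcal{N}_1)=\mathcal{N}_2$, so $\Phi$ descends to $\widetilde{\mathbb{L}}_1 \cong \widetilde{\mathbb{L}}_2$, and composition with $\pi_2^{-1}\circ\Phi\circ\pi_1$ yields the desired $\mathbb{K}_1 \cong \mathbb{K}_2$.

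The main obstacle is the surjectivity of $\pi_j$ in step two, which is where Cantor completeness takes the place of the saturation property that $\mathbb{K}_j$ lacks. Given $[x]\in\widetilde{\mathbb{L}}_j$ with $x\in\mathcal{M}_j$, the plan is to construct a $\mathbb{K}_j$-representative iteratively: at each stage one extracts an $\mathbb{R}$-valued standard part of a suitable normalization $x\cdot s_j^{k}$ to obtain a $\mathbb{K}_j$-approximation whose error lives at a strictly deeper scale, producing a nested countable family of closed bounded $\mathbb{K}_j$-intervals whose widths are cofinal among the $s_j^n$. Condition~(ii) then supplies $y\in\mathbb{K}_j$ in the intersection with $x-y\in\mathcal{N}_j$, so $\pi_j(y)=[x]$. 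The delicate point is controlling the residual valuation between successive stages, since the value group of $\mathbb{L}_j$ is richer than $\mathbb{Z}\cdot v(s_j)$; the secondary issue of arranging $\Phi(s_1)=s_2$ in step three is comparatively routine, as both scales realize the same one-variable type (``positive, infinitesimal, with $(s^{-n})$ unbounded'') in their respective saturated fields.
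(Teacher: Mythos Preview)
The paper does not give a self-contained argument here; it simply refers to Todorov--Wolf, where the uniqueness is obtained by a \emph{valuation-theoretic} route: one equips $\mathbb{K}$ with the $s$-adic valuation $v_s$, identifies the value group and residue field as $\mathbb{R}$, shows (using Cantor completeness) that $\mathbb{K}$ is spherically complete, and then invokes the Kaplansky/Hahn classification of maximally complete valued fields to realize $\mathbb{K}$ as a canonical Hahn-type field independent of the original data. Your model-theoretic strategy---passing to a saturated hull and recovering $\mathbb{K}$ as the $s$-moderate/$s$-negligible quotient---is a genuinely different idea, and the reduction to Theorem~\ref{T: First Uniqueness Result} together with the homogeneity step $\Phi(s_1)=s_2$ is sound.

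The substantive gap is exactly where you place it: the surjectivity of $\pi_j$. Your sketch (``extract an $\mathbb{R}$-valued standard part of $x\cdot s_j^{k}$ and iterate'') does not work as written, and the difficulty is not merely delicate---it is a genuine obstruction. In the saturated $\mathbb{L}_j$ the finitely consistent type
\[
\{\,0<\eta<1/n : n\in\mathbb{N}\,\}\ \cup\ \{\,\eta>s_j^{\,q} : q\in\mathbb{Q}_{>0}\,\}
\]
is realized (only countably many parameters are involved), and any realizer $\eta$ is infinitesimal in $\mathbb{L}_j$ with $v_{s_j}(\eta)=0$. Your first ``standard part'' step returns $0$, and the residual is $\eta$ again: the $s_j$-scale has not deepened at all. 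More structurally, the cut in $\mathbb{K}_j$ separating the positive infinitesimals from the positive non-infinitesimals has width $\geq s_j$ (since $b-a$ is non-infinitesimal for every $a$ infinitesimal and $b$ non-infinitesimal), so \emph{any} $x\in\mathbb{L}_j$ realizing this cut lies in $\mathcal{M}_j$ yet has no $\mathcal{N}_j$-close approximant in $\mathbb{K}_j$. Whether $\mathbb{L}_j$ actually contains such an $x$ depends on the cofinality of the cut (here $\kappa^+$ on the infinitesimal side, by Cantor completeness) versus the saturation of $\mathbb{L}_j$; you have not controlled the embedding $\mathbb{K}_j\hookrightarrow\mathbb{L}_j$ finely enough to exclude it. Closing this gap appears to require precisely the spherical-completeness/valuation analysis that the Todorov--Wolf argument uses directly, so the detour through the First Uniqueness Result does not obviously buy a shortcut.
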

\begin{proof} We refer to Todorov \& Wolf~\cite{TodorovWolf} (where $\mathbb K$ appears as $^\rho\mathbb R$). For a presentation see also (Todorov~\cite{tdTodorovAxioms11}, Section 3). 
\end{proof} 
	Notice that (iii) and (iv) imply that $\mathbb R$ is a proper subfield of $\mathbb K$ and (iv) implies that $\mathbb K$ is non-saturated. 

\section{Generalized Scalars in Axioms}\label{S: Generalized Scalars and Functions in Axioms}
We describe a field of generalized numbers $\widehat{\mathbb{R}}$ along with its complex companion $\widehat{\mathbb{C}}=\widehat{\mathbb{R}}(i)$ by means of several axioms. 
What follows is a modification and simplification of some results in Todorov~\cite{tdTodorovAxioms11}.
\begin{description}

\item[Axiom 1 (Cardinality Principle)] $\card(\widehat{\mathbb{R}})=\mathfrak{c}^+$. 
\item[Axiom 2 (Transfer Principle)] $\widehat{\mathbb{R}}$ is a \textbf{real closed Cantor complete field}.

\end{description}

\begin{description}

\item[Axiom 3 (Extension Principle)] $\widehat{\mathbb{R}}$ contains $\mathbb{R}$ as a subfield, i.e. $\mathbb{R}\subseteq\widehat{\mathbb{R}}$.
\end{description}

\begin{description}
\item[Axiom 4 (Scale Principle)] $\widehat{\mathbb{R}}$ admits an \textbf{infinitesimal scale}, i.e. there exists a positive infinitesimal $s$ in $\widehat{\mathbb{R}}$ such that the sequence $(s^{-n})$ is unbounded (from above). 
\end{description}

	Notice that the exponents $s^q$ are well defined in $\widehat{\mathbb{R}}$ for all $q\in\mathbb{Q}$ since $\widehat{\mathbb R}$ is a real closed field. In addition we impose the following:

\begin{description}
\item[Axiom 5 (Exponentiation Principle)] $\widehat{\mathbb{R}}$ admits {\bf exponentiation} in the sense that for every infinitesimal scale $s\in\widehat{\mathbb R}$ there exists a strictly decreasing function $\exp_s: \mathcal F(\widehat{\mathbb{R}})\to\widehat{\mathbb{R}}_+$ which is a group isomorphism from $(\mathcal F(\widehat{\mathbb{R}}), +)$ onto $(\widehat{\mathbb{R}}_+, \cdot)$ such that $(\forall q\in\mathbb{Q})(\exp_s(q)=s^q)$. We shall often write $s^x$ instead of $\exp_s(x)$.
\end{description}

	Notice that the inverse $\log_s: \widehat{\mathbb{R}}_+\to\mathcal F(\widehat{\mathbb{R}})$ of $\exp_s$ exists and $\ln{s}=1/\log_s{e}$.


\section{Uniqueness and Existence of Generalized Scalars}\label{S: Uniqueness and Existence of Generalized Scalars}

	We show both the existence and uniqueness of the fields $\widehat{\mathbb R}$ and $\widehat{\mathbb C}=\widehat{\mathbb R}(i)$.

\begin{theorem}[Uniqueness of Scalars]\label{T: Uniqueness of Scalars} If there exists a field $\widehat{\mathbb{R}}$ satisfying Axiom 1-5, then $\widehat{\mathbb{R}}$ is unique up to a field isomorphism. Consequently, the field $\widehat{\mathbb{C}}=:\widehat{\mathbb{R}}(i)$ is also uniquely determined by Axiom 1-5 up to a field isomorphism.
\end{theorem}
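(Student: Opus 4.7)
The plan is to reduce the theorem to an immediate application of Theorem~\ref{T: Second Uniqueness Result} (the Second Uniqueness Result) with $\kappa = \mathfrak{c}$. The first step is to verify that Axioms 1--4 correspond precisely to the hypotheses (i)--(iv) of that theorem: Axiom 1 supplies the cardinality $\mathfrak{c}^+$; Axiom 2 supplies Cantor completeness and real closedness; Axiom 3 exhibits $\mathbb R$ as a subfield; and Axiom 4 provides the infinitesimal scale. Since $\mathfrak{c}$ is an infinite cardinal, the Second Uniqueness Result applies and yields uniqueness of $\widehat{\mathbb R}$ up to field isomorphism. Note that Axiom 5 (exponentiation) plays no role in this step; the underlying field structure is already pinned down by Axioms 1--4.

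For the second assertion, concerning $\widehat{\mathbb C} = \widehat{\mathbb R}(i)$, let $\widehat{\mathbb R}_1$ and $\widehat{\mathbb R}_2$ be any two instantiations of the scalar field, and let $\varphi : \widehat{\mathbb R}_1 \to \widehat{\mathbb R}_2$ be the field isomorphism produced in the previous step. I would extend $\varphi$ to $\widetilde\varphi : \widehat{\mathbb R}_1(i) \to \widehat{\mathbb R}_2(i)$ by the rule $a + bi \mapsto \varphi(a) + \varphi(b)\,i$. Since both $\widehat{\mathbb R}_j$ are formally real (being real closed), the polynomial $x^2 + 1$ is irreducible over each, so $\widehat{\mathbb R}_j(i) \cong \widehat{\mathbb R}_j[x]/(x^2+1)$. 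The map $\widetilde\varphi$ is then a well-defined bijective ring homomorphism, since the defining relation $i^2 = -1$ is preserved, and is therefore a field isomorphism.

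The principal difficulty of the theorem has been outsourced to Theorem~\ref{T: Second Uniqueness Result}, whose proof ultimately rests on the GCH adopted as part of the set-theoretical framework. The only genuinely new verifications are the direct match-up of Axioms 1--4 with conditions (i)--(iv) of the Second Uniqueness Result and the canonical extension of a real-field isomorphism across the adjunction of $i$. Neither step presents a real obstacle; the only point requiring even a moment's thought is that an arbitrary field isomorphism $\widehat{\mathbb R}_1 \to \widehat{\mathbb R}_2$ need not a priori respect the exponentiation maps granted by Axiom 5 --- but the present theorem only claims isomorphism \emph{as fields}, so no issue arises.
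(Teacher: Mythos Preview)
Your proposal is correct and follows exactly the paper's approach: the paper's proof is the single sentence ``This is a particular case of Theorem~\ref{T: Second Uniqueness Result} for $\kappa=\mathfrak{c}$,'' and your write-up merely makes explicit the matching of Axioms 1--4 with hypotheses (i)--(iv) and the routine extension of the isomorphism to $\widehat{\mathbb R}(i)$. Your observation that Axiom~5 plays no role in pinning down the field structure is also accurate.
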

\begin{proof} This is a particular case of Theorem~\ref{T: Second Uniqueness Result} for $\kappa=\frak{c}$. 
\end{proof}

	Let $^*\mathbb R$ be a field of non-standard real numbers (Robinson\cite{aRob66},  Lindstr\o m~\cite{tLin}). Let $\rho$ be a positive infinitesimal in $^*\mathbb R$. The \textbf{Robinson field of $\rho$-asymptotic numbers} is defined  by $^\rho\mathbb R=\mathcal M_\rho/\mathcal N_\rho$, where   
\begin{align}
& \mathcal M_\rho=\big\{\xi\in{^*\mathbb R}: |\xi|\leq\rho^{-n} \text{ for some } n\in\mathbb N\big\},\notag\\
&\mathcal N_\rho=\big\{\xi\in{^*\mathbb R}: |\xi|<\rho^{n} \text{ for all } n\in\mathbb N\big\},\notag
\end{align}
(Robinson~\cite{aRob73},  Lightstone \& Robinson~\cite{LiRob}). We denote by $\widehat{\xi}$ the equivalence class of $\xi\in\mathcal M_\rho$.

\begin{theorem}[Existence of Scalars]\label{T: Existence of Scalars}  Let $^*\mathbb R$ be a field of non-standard real numbers with $\card(^*\mathbb R)=\frak{c}^+$ (Cavalcante~\cite{rCav}). Then:
\begin{T-enum}
\item $^\rho\mathbb R$ satisfies Axioms 1-5 ($^\rho\mathbb R$ is a model for these axioms). 

\item $^\rho\mathbb R$ and $\widehat{\mathbb R}$ are isomorphic and $\widehat{\rho}$ is a scale for $\widehat{\mathbb R}$. 
\end{T-enum}
Consequently, both $\widehat{\mathbb R}$ and $\widehat{\mathbb C}=\widehat{\mathbb R}(i)$ exist.
\end{theorem}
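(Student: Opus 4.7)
The plan is to verify Axioms~1--5 for ${}^\rho\mathbb R$ one at a time, which establishes (i); (ii) is then immediate from Theorem~\ref{T: Uniqueness of Scalars} with $\widehat{\rho}$ playing the role of the infinitesimal scale, and $\widehat{\mathbb C}=\widehat{\mathbb R}(i)$ drops out by adjoining $i$.

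First I would dispatch the two algebraic axioms. For the Transfer Principle, the key structural fact is that $\mathcal M_\rho$ is a convex subring of the real closed field ${}^*\mathbb R$ and $\mathcal N_\rho$ is a convex maximal ideal of $\mathcal M_\rho$; a standard valuation-theoretic theorem then yields that the residue field ${}^\rho\mathbb R=\mathcal M_\rho/\mathcal N_\rho$ is itself real closed. Cantor completeness leans on the $\mathfrak{c}^+$-saturation of ${}^*\mathbb R$: given a family $\{[\widehat a_\gamma,\widehat b_\gamma]\}_{\gamma\in\Gamma}$ of closed intervals in ${}^\rho\mathbb R$ with the finite intersection property and $\card(\Gamma)\le\mathfrak{c}$, I would lift to a family of internal closed intervals in ${}^*\mathbb R$ (slightly thickened to absorb the $\mathcal N_\rho$-equivalence), apply saturation to produce a common point, and project back down. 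The Cardinality Principle then follows from $\mathfrak{c}=|\mathbb R|\le|{}^\rho\mathbb R|\le|{}^*\mathbb R|=\mathfrak{c}^+$, using a saturation argument once more to exclude $|{}^\rho\mathbb R|=\mathfrak{c}$.

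Next I would handle the three more concrete axioms. Axiom~3 is immediate: every $r\in\mathbb R\subset{}^*\mathbb R$ lies in $\mathcal M_\rho$, distinct reals differ by a non-infinitesimal, so the composition $\mathbb R\hookrightarrow\mathcal M_\rho\twoheadrightarrow{}^\rho\mathbb R$ is injective. For Axiom~4 I would take $s=\widehat\rho$; any $\widehat\xi\in{}^\rho\mathbb R$ has a representative $\xi$ with $|\xi|\le\rho^{-n}$ for some $n$, and $s^{-(n+1)}=\widehat{\rho^{-(n+1)}}$ strictly dominates $\widehat\xi$, so $(s^{-n})$ is unbounded. For Axiom~5 I would set $\exp_s(\widehat\xi):=\widehat{\rho^\xi}$, using the internal exponential on a representative $\xi\in\mathcal F({}^*\mathbb R)$ (which exists whenever $\widehat\xi\in\mathcal F({}^\rho\mathbb R)$); the homomorphism property, strict monotonicity, the identity $\exp_s(q)=s^q$ for rational $q$, and surjectivity onto $\widehat{\mathbb R}_+$ all transfer from ${}^*\mathbb R$ via the internal logarithm.

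The main obstacle will be the well-definedness of $\exp_s$: I need $\rho^\xi-\rho^\eta\in\mathcal N_\rho$ whenever $\xi-\eta\in\mathcal N_\rho$ with $\xi,\eta\in\mathcal F({}^*\mathbb R)$. Factoring $\rho^\xi-\rho^\eta=\rho^\eta(\rho^{\xi-\eta}-1)$ reduces the problem to showing $\rho^\delta-1\in\mathcal N_\rho$ for every $\delta\in\mathcal N_\rho$. The subtlety is that $\ln\rho$ is non-standardly infinite, so \emph{a priori} the product $\delta\ln\rho$ could escape $\mathcal N_\rho$; however $|\delta|<\rho^n$ for every standard $n$, and transferring the real fact that $xe^{-kx}\to 0$ as $x\to\infty$ for each $k$ yields $\delta\ln\rho\in\mathcal N_\rho$, whence $\rho^\delta=\exp(\delta\ln\rho)\in 1+\mathcal N_\rho$. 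This transfer argument, together with the companion continuity estimate needed for injectivity of $\exp_s$ on $\mathcal F({}^\rho\mathbb R)$, is the only step that genuinely requires non-standard reasoning; every other piece reduces to valuation theory or direct algebra.
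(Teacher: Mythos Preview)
Your proposal is correct and, for part~(ii) and for Axioms~1--4, essentially matches the paper (which in fact leaves Axioms~1--4 entirely to the reader, so you actually supply more detail than the original). The genuine divergence is in the well-definedness argument for Axiom~5. The paper applies the mean value theorem by transfer: for $\xi,\eta\in\mathcal F({}^*\mathbb R)$ one has $|\rho^\xi-\rho^\eta|=|\rho^{\xi+t(\eta-\xi)}\ln\rho|\cdot|\xi-\eta|$ for some $t\in{}^*(0,1)$, and since $\xi+t(\eta-\xi)$ is finite the factor $\rho^{\xi+t(\eta-\xi)}\ln\rho$ lies in $\mathcal M_\rho$, forcing the difference into $\mathcal N_\rho$. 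Your route instead factors $\rho^\xi-\rho^\eta=\rho^\eta(\rho^{\xi-\eta}-1)$ and reduces to showing $\delta\ln\rho\in\mathcal N_\rho$ for $\delta\in\mathcal N_\rho$, which you get from the growth estimate $|\ln\rho|\le\rho^{-1}$ (your transfer of $xe^{-kx}\to 0$ is more than needed; the elementary bound $\ln(1/x)<1/x$ already suffices). The paper's MVT argument is a one-liner and has the pedagogical advantage of foreshadowing Axiom~7, where MVT reappears as a structural axiom for $\widehat{\mathcal E(\Omega)}$; your argument is slightly more hands-on but avoids invoking a calculus theorem and makes the role of the moderate growth of $\ln\rho$ explicit, which is arguably more transparent for the injectivity direction you flag at the end.
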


\begin{proof}
\begin{Pf-enum}

\item  We leave it to the reader to verify that $^\rho\mathbb R$ satisfies Axioms 1-4. Here is a simple proof that $^\rho\mathbb R$ satisfies Axiom 5 (treated as a theorem): The (internal) function $f: {^*\mathbb R}\to{^*\mathbb R}$, $f(\xi)=\rho^\xi=e^{\xi\ln{\rho}}$, is well-defined for all $\xi\in{^*\mathbb R}$ by the Transfer Principle (see Robinson~\cite{aRob66} or/and Lindstr\o m~\cite{tLin}) and $f^\prime(\xi)=\rho^\xi\ln{\rho}$. Let $\xi, \eta\in\mathcal F(^*\mathbb R)$. Next, we show that $\xi-\eta\in\mathcal N_\rho$ implies $\rho^\xi-\rho^\eta\in\mathcal N_\rho$. Indeed, by the mean value theorem (applied again, by transfer) $|\rho^\xi-\rho^\eta|=|\rho^{\xi+t(\eta-\xi)}\ln{\rho}|\,|\xi-\eta|$ for some $t\in{^*(0, 1)}$. We observe that $\rho^{\xi+t(\eta-\xi)}\ln{\rho}\in\mathcal M_\rho$ since $\xi+t(\eta-\xi)$ is finite and thus $\rho^\xi-\rho^\eta\in\mathcal N_\rho$. The latter makes the definition $s^{\widehat{\xi}}= \widehat{\rho^\xi}$ correct for all $\xi\in\mathcal F(^*\mathbb R)$, where $s=\widehat{\rho}$. We also observe that $\widehat{\xi}$ is finite in $^\rho\mathbb R$ \ifff $\xi$ is finite in $^*\mathbb R$ which completes the definition of the exponent $s^x$ after letting $x=\widehat{\xi}$. We leave it to the reader to verify that $s^x$ is a group isomorphism.
\item  $^\rho\mathbb R$ and $\widehat{\mathbb R}$ are isomorphic by Theorem~\ref{T: Uniqueness of Scalars} since both fields satisfy Axioms 1-5.  
\end{Pf-enum}
\end{proof}
\section{Some Properties of the Fields $\widehat{\mathbb{R}}$ and $\widehat{\mathbb{C}}$}
	In addition to the properties of any non-Archimedean field (Section~\ref{S: Background from Algebra}), we have the following:
\begin{lemma} 
\begin{L-enum}
\item $\mathbb R\cap\mathcal I(\widehat{\mathbb R})=\{0\}$. Consequently, $\mathbb C\cap\mathcal I(\widehat{\mathbb C})=\{0\}$.

\item $\widehat{\mathbb R}$ is a \textbf{totally ordered field} under the order relation: $x\geq 0$ in $\widehat{\mathbb R}$ if $x=y^2$ for some $y\in\widehat{\mathbb R}$. We endow $\widehat{\mathbb R}$ with the \textbf{order topology}. 
\item $^\st\widehat{\mathbb R}=\mathbb R$. Consequently,  the \textbf{standard part mapping} $\widehat{\st}: \mathcal F(\widehat{\mathbb R})\to\widehat{\mathbb R}$ is an order preserving ring homomorphism with range $\widehat{\st}[\mathcal F(\widehat{\mathbb R})]=\mathbb R$ (we use the notation $\widehat{\st}$ instead of $\st_{\widehat{\mathbb R}}$). Similarly, $\widehat{\st}: \mathcal F(\widehat{\mathbb C})\to\widehat{\mathbb C}$ is a ring homomorphism with $\widehat{\st}[\mathcal F(\widehat{\mathbb C})]=\mathbb C$.
\end{L-enum}
\end{lemma}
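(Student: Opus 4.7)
First I dispose of (ii), which is a direct application of the algebra from Section~\ref{S: Background from Algebra}. Since Axiom~2 says $\widehat{\mathbb R}$ is real closed, item~6 of Section~\ref{S: Background from Algebra} gives it a unique total order under which the nonnegative elements are exactly the squares. This order automatically restricts to the usual order on $\mathbb R\subseteq\widehat{\mathbb R}$ because every nonnegative real is already a real square and no negative real can become a square in any ordered extension. With the order in place, the definitions of $\mathcal I$, $\mathcal F$ and the canonical map $\st_{\widehat{\mathbb R}}$ from item~2 of Section~\ref{S: Background from Algebra} become available.

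For part (i), I invoke the Archimedean property of $\mathbb R$. If $r\in\mathbb R$ lies in $\mathcal I(\widehat{\mathbb R})$, then $|r|<1/n$ in $\widehat{\mathbb R}$ for every $n\in\mathbb N$; because the order on $\widehat{\mathbb R}$ restricts to the standard order on $\mathbb R$, the inequality holds already in $\mathbb R$, forcing $r=0$. For the complex consequence, write $z=a+bi\in\mathbb C\cap\mathcal I(\widehat{\mathbb C})$ with $a,b\in\mathbb R$; the bounds $|a|,|b|\leq|z|$ together with the fact that $|z|$ is infinitesimal in $\widehat{\mathbb R}$ force $a,b\in\mathbb R\cap\mathcal I(\widehat{\mathbb R})=\{0\}$, whence $z=0$.

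Part (iii) is the substantive piece. First $\mathbb R\subseteq\mathcal F(\widehat{\mathbb R})$ because each real is dominated in absolute value by some natural number. For an arbitrary $a\in\mathcal F(\widehat{\mathbb R})$, consider $A=\{r\in\mathbb R:r\leq a\}$; this set is nonempty and bounded above in $\mathbb R$ (by any $n\in\mathbb N$ with $|a|\leq n$), so Dedekind completeness of $\mathbb R$ yields $r_0:=\sup A\in\mathbb R$. A sandwich argument shows $a-r_0\in\mathcal I(\widehat{\mathbb R})$: if $a>r_0+1/n$ for some $n$ then $r_0+1/(2n)\in A$ contradicts $r_0=\sup A$, while if $a<r_0-1/n$ then $r_0-1/n$ is a strictly smaller upper bound of $A$. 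Uniqueness of such an $r_0\in\mathbb R$ is exactly part (i). Hence the canonical surjection $\mathcal F(\widehat{\mathbb R})\to{^\st}\widehat{\mathbb R}$ restricts to a bijection $\mathbb R\to{^\st}\widehat{\mathbb R}$, so ${^\st}\widehat{\mathbb R}=\mathbb R$, and $\widehat{\st}:\mathcal F(\widehat{\mathbb R})\to\widehat{\mathbb R}$ (defined by $a\mapsto r_0$) is an order-preserving ring homomorphism onto $\mathbb R$; its ring and order properties are inherited from the general facts of item~2 of Section~\ref{S: Background from Algebra}. The complex case is the componentwise extension: for $z=x+iy\in\mathcal F(\widehat{\mathbb C})$, the bounds $|x|,|y|\leq|z|$ place $x,y\in\mathcal F(\widehat{\mathbb R})$, so $\widehat{\st}(z):=\widehat{\st}(x)+i\,\widehat{\st}(y)\in\mathbb C$ is well-defined with range $\mathbb C$. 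The only real obstacle is the surjectivity step in (iii), and the key observation is that one invokes Dedekind completeness of the Archimedean subfield $\mathbb R$ (available through Axiom~3) rather than the weaker Cantor completeness of $\widehat{\mathbb R}$ from Axiom~2.
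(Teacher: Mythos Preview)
Your proof is correct and, for parts (i) and (ii), essentially identical to the paper's argument: both invoke Axiom~3 and the Archimedean property of $\mathbb R$ for (i), and Axiom~2 together with the unique orderability of real closed fields for (ii).

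For part (iii) you take a more explicit route than the paper. The paper's proof simply quotes the background fact from Section~\ref{S: Background from Algebra}, item~2, that the residue field $^{\st}\mathbb K=\mathcal F(\mathbb K)/\mathcal I(\mathbb K)$ of \emph{any} totally ordered field is a subfield of $\mathbb R$; this immediately gives $^{\st}\widehat{\mathbb R}\subseteq\mathbb R$, and the reverse inclusion follows because $\mathbb R\subset\mathcal F(\widehat{\mathbb R})$ injects into the quotient by part~(i). You instead construct the standard part directly via the Dedekind cut $A=\{r\in\mathbb R:r\leq a\}$ and a sandwich argument. This is exactly the argument one would use to \emph{prove} the cited background fact, so you are unpacking what the paper treats as a black box. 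Your approach has the virtue of being self-contained and making visible where completeness of $\mathbb R$ enters; the paper's approach is shorter because the work has been pushed into the preliminaries.
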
	
\begin{proof} 
\begin{Pf-enum}
\item The intersection in $\mathbb R\cap\mathcal I(\widehat{\mathbb R})$ makes sense since $\mathbb R\subset\widehat{\mathbb R}$ by Axiom 3. On the other hand, the only infinitesimal in $\mathbb R$ is the zero since $\mathbb R$ is an Archimedean field.

\item follows directly from the fact that $\widehat{\mathbb R}$ is a real closed field (Axiom 2).

\item  $^\st\widehat{\mathbb R}\subseteq\mathbb R$ holds trivially (it holds for any field). Also, $\mathbb R\subset\widehat{\mathbb R}$ (Axiom 3) implies $\mathbb R\subset\mathcal F(\widehat{\mathbb R})$. Next, if $\widehat{\st}(r)=0$ for some $r\in\mathbb R$, then $r=0$. The latter means $\mathbb R\subseteq{^\st\widehat{\mathbb R}}$ as required. 
\end{Pf-enum}
\end{proof} 

\begin{definition}[$s$-Valuation] Let $s$ be a scale of $\widehat{\mathbb R}$ (Axiom 4). We shall keep $s$ fixed in what follows. 
\begin{D-enum}
\item  We define a valuation $v_s:\widehat{\mathbb{C}}\to\mathbb{R}\cup\{\infty\}$ by $v_s(0)=\infty$ and $v_s(x)=\sup\{q\in\mathbb{Q}: |x|/s^q\approx 0\}$ if $x\not=0$. 
\item We define $d_s: \widehat{\mathbb C}\times\widehat{\mathbb C}\to\mathbb R$ by $d_s(x, y)=e^{-v_s(x-y)}$.
\end{D-enum}
\end{definition}
\begin{theorem}[Properties of Valuation] The mapping $v_s$ is a \textbf{convex non-Archimedean valuation} on $\widehat{\mathbb{C}}$ in the sense that for every $x,y\in\widehat{\mathbb{C}}$ {\em :} 
\begin{T-enum} 
\item $v_s(x)=\infty$ \ifff $x=0$.
\item $v_s(xy)=v_s(x)+v_s(y)$,
\item $v_s(x+y)\geq\min\{v_s(x), v_s(y)\}$.
\item $(|x|<|y|\Rightarrow v_s(x)\geq v_s(y))$.
\end{T-enum}
\end{theorem}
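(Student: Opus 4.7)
The plan is to exploit the fact that the set $E(x) := \{q \in \mathbb{Q} : |x|/s^q \approx 0\}$ defining $v_s(x)$ is downward closed in $\mathbb{Q}$: if $|x|/s^q$ is infinitesimal and $q' < q$ in $\mathbb{Q}$, then $0 < s < 1$ gives $s^{q'} > s^q$, so $|x|/s^{q'} < |x|/s^q$ is also infinitesimal. This yields the working characterization that for every $q \in \mathbb{Q}$ with $q < v_s(x)$ one has $|x|/s^q \approx 0$, while for every $q \in \mathbb{Q}$ with $q > v_s(x)$, $|x|/s^q$ is not infinitesimal. This dichotomy will drive all four items.

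First I would verify (i) by showing that $v_s(x) \in \mathbb{R}$ whenever $x \neq 0$; this is the one place where Axiom 4 is genuinely needed. Given $x \neq 0$, the unboundedness of $(s^{-n})$ supplies integers $N, M \in \mathbb{N}$ with $s^{-N} > |x|$ and $s^{-M} > 1/|x|$. Then $|x|\,s^{N+1} < s$, which is infinitesimal, so $-(N+1) \in E(x)$ and $E(x) \neq \varnothing$. Also, for any rational $q > M$ one has $|x|/s^q > s^{M-q} > 1$, so $q \notin E(x)$, and hence $E(x)$ is bounded above. Therefore $v_s(x)$ is a finite real number. The reverse direction in (i) is definitional.

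Property (iv) is then immediate from monotonicity: $|x| < |y|$ implies $E(y) \subseteq E(x)$, hence $v_s(y) \leq v_s(x)$. For (iii), fix any rational $q < \min\{v_s(x), v_s(y)\}$; the characterization gives that both $|x|/s^q$ and $|y|/s^q$ are infinitesimal, so $|x+y|/s^q \leq (|x|+|y|)/s^q$ is infinitesimal too, and thus $q < v_s(x+y)$. Passing to the supremum over such rationals yields (iii). The boundary cases in which $x$, $y$, or $x+y$ is zero are handled by the convention $\infty + r = r + \infty = \infty$.

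The bulk of the work is (ii), and I expect the reverse inequality to be the delicate point. For $v_s(xy) \geq v_s(x)+v_s(y)$, any rational $q < v_s(x)+v_s(y)$ can be split as $q = a+b$ with $a, b \in \mathbb{Q}$, $a < v_s(x)$, $b < v_s(y)$; then $|xy|/s^q$ is a product of two infinitesimals, hence infinitesimal, so $q < v_s(xy)$. For the reverse, given any rational $q > v_s(x)+v_s(y)$, write $q = a+b$ with rationals $a > v_s(x)$, $b > v_s(y)$; by the characterization $|x|/s^a$ and $|y|/s^b$ are both non-infinitesimal, so there exist $n, m \in \mathbb{N}$ with $|x|/s^a \geq 1/n$ and $|y|/s^b \geq 1/m$, forcing $|xy|/s^q \geq 1/(nm)$, which is not infinitesimal. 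Thus $q \notin E(xy)$, and the downward closedness of $E(xy)$ gives $v_s(xy) \leq q$ for every such $q$. Letting $q$ approach $v_s(x)+v_s(y)$ from above through $\mathbb{Q}$ (using the density of $\mathbb{Q}$ in $\mathbb{R}$) closes the argument; the infinite cases follow from (i) via the convention on $\infty$.
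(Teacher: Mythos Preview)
Your argument is correct. The paper itself gives no proof of this theorem at all --- it simply states ``We leave the verification to the reader'' --- so there is nothing to compare against. You have supplied exactly the routine verification the author omitted: the downward-closedness of $E(x)$, the finiteness of $v_s(x)$ for $x\neq 0$ via the scale axiom, and the standard sup/inf arguments for the ultrametric inequality and multiplicativity are all handled cleanly and accurately.
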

\begin{proof} We leave the verification to the reader.
\end{proof}

\begin{theorem}[Algebraic and Topological Properties]\label{T: Algebraic and Topological Properties} \begin{T-enum}
\item $\widehat{\mathbb{C}}$ is an \textbf{algebraically closed field}. We endow\, $\widehat{\mathbb C}$ with the product topology inherited from the order topology on $\widehat{\mathbb R}$. 

\item Both $(\widehat{\mathbb{R}}, d_s)$ and $(\widehat{\mathbb{C}}, d_s)$ is \textbf{spherically complete ultra-metric spaces} in the sense that every nested sequence of closed balls in $\widehat{\mathbb{R}}$ or in $\widehat{\mathbb{C}}$ has a non-empty intersection. Consequently, both $\widehat{\mathbb{R}}$ and $\widehat{\mathbb{C}}$ are also  \textbf{sequentially complete}. 

\item The product-order topology and the metric topology (sharp topology) coincide on both $\widehat{\mathbb{R}}$ and on $\widehat{\mathbb{C}}$. 
\item Let $(a_n)$ be a sequence in $\widehat{\mathbb{C}}$. Then $\lim_{n\to\infty}a_n=0$  \ifff $\lim_{n\to\infty}v_s(a_n)=\infty$   \ifff $\sum_{n=0}^\infty a_n$ is convergent in $\widehat{\mathbb{C}}$.
\end{T-enum}
\end{theorem}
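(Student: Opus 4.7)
For part (i), no new work is needed: Axiom~2 makes $\widehat{\mathbb R}$ real closed, and item~7 of Section~\ref{S: Background from Algebra} then gives that $\widehat{\mathbb C}=\widehat{\mathbb R}(i)$ is algebraically closed. The topology on $\widehat{\mathbb C}$ is the stated product topology.

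For part (ii), the ultrametric inequality $d_s(x,z)\leq\max\{d_s(x,y),d_s(y,z)\}$ is obtained by applying the decreasing map $t\mapsto e^{-t}$ to the valuation inequality $v_s(x-z)\geq\min\{v_s(x-y),v_s(y-z)\}$, and the remaining metric axioms are equally direct consequences of the valuation theorem. The substance lies in spherical completeness. Take a nested sequence of closed balls $B_n=\{x\in\widehat{\mathbb R}:v_s(x-c_n)\geq t_n\}$ with $t_n\in\mathbb R$. The strategy is to sandwich each $B_n$ between closed bounded intervals of $\widehat{\mathbb R}$ indexed by rationals: for rational $q<t_n$ one checks $B_n\subseteq[c_n-s^q,c_n+s^q]$ (since $v_s(y)\geq t_n>q$ forces $|y|/s^q\approx 0$, hence $|y|<s^q$), while for rational $q>t_n$ one checks $[c_n-s^q,c_n+s^q]\subseteq B_n$ (since $|y|\leq s^q$ and rational $q'<q$ give $|y|/s^{q'}\leq s^{q-q'}\approx 0$, whence $v_s(y)\geq q>t_n$). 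The countable outer family $\{[c_n-s^q,c_n+s^q]:n\in\mathbb N,\ q\in\mathbb Q,\ q<t_n\}$ has the finite intersection property, since any finite subcollection contains the nonempty ball $B_N$ for $N$ the largest index appearing. Cantor completeness of $\widehat{\mathbb R}$ (Axiom~2, with $\kappa=\mathfrak c$) then supplies a common point $x^*$, and the inner sandwich clause forces $x^*\in\bigcap_n B_n$. The complex case reduces to the real one via the identity $v_s(a+bi)=\min\{v_s(a),v_s(b)\}$, proved from $|x|^2=a^2+b^2$ and the valuation theorem, which realizes each closed ball of $\widehat{\mathbb C}$ as a product of closed balls of $\widehat{\mathbb R}$. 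Sequential completeness is then the standard ultrametric recipe: any Cauchy sequence produces a nested chain of closed balls whose radii tend to zero, and any point in their intersection is the limit.

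For part (iii), I show that each sharp ball and each order interval contain one another around any of their points. Given an interval $(x_0-\epsilon,x_0+\epsilon)$ with $\epsilon>0$ in $\widehat{\mathbb R}$, Axiom~4 yields $n\in\mathbb N$ with $s^n<\epsilon$, so the sharp ball $\{x:v_s(x-x_0)>n\}$ sits inside the interval because $v_s(y)>n$ forces $|y|<s^n<\epsilon$. Conversely, a sharp ball $\{x:v_s(x-x_0)>t\}$ contains the order interval $(x_0-s^q,x_0+s^q)$ for any rational $q>t$, since $|y|\leq s^q$ forces $v_s(y)\geq q>t$. On $\widehat{\mathbb C}$ the metric $d_s$ is the coordinatewise maximum of the two real metrics (again via $v_s(a+bi)=\min\{v_s(a),v_s(b)\}$), so the sharp topology on $\widehat{\mathbb C}$ is the product of the two sharp topologies on $\widehat{\mathbb R}$, which by the preceding equals the product of the order topologies.

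For part (iv), the sequence $d_s(a_n,0)=e^{-v_s(a_n)}$ takes values in $\mathbb R$, so convergence of $a_n$ to $0$ in the sharp topology is literally $v_s(a_n)\to\infty$; that is the first equivalence. For the second, the ultrametric bound $d_s(S_N,S_M)\leq\max_{M<n\leq N}d_s(a_n,0)$ on the partial sums $S_N=\sum_{k=0}^N a_k$ shows that $(S_N)$ is Cauchy as soon as $a_n\to 0$, and sequential completeness from part~(ii) upgrades Cauchy to convergent; the converse comes from the telescope $a_n=S_n-S_{n-1}$. The most delicate point throughout is the bookkeeping in (ii): the thresholds $t_n$ are arbitrary real numbers whereas the natural interval radii $s^q$ come indexed by rationals, so one must verify that the two-sided sandwich is tight enough that Cantor completeness applied to the countable outer family of intervals genuinely lands inside every $B_n$.
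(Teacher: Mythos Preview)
Your proof is correct and far more detailed than the paper's, which for parts (ii)--(iv) simply refers to Lightstone \& Robinson (\emph{Nonarchimedean Fields and Asymptotic Expansions}, Ch.~1, \S4) and notes that (i) follows from Axiom~2. Your route is genuinely different in spirit: you argue entirely from the axioms, obtaining spherical completeness from Cantor completeness (Axiom~2) by rewriting each ultrametric ball $B_n$ as a countable intersection of closed order-intervals $[c_n-s^q,c_n+s^q]$ with $q\in\mathbb Q$, $q<t_n$, and then invoking the f.i.p.\ over a countable index set. The cited source, by contrast, works inside a concrete model. What your approach buys is a self-contained axiomatic argument that does not appeal to any construction of $\widehat{\mathbb R}$; what the paper's citation buys is brevity.

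One expository wrinkle: in (ii) you say ``the inner sandwich clause forces $x^*\in\bigcap_n B_n$,'' but your inner sandwich concerns rationals $q>t_n$, whereas $x^*$ is known to lie only in the outer intervals indexed by $q<t_n$. The step you actually need (and which your closing paragraph rightly flags as the delicate one) is that $|y|\le s^q$ for \emph{every} rational $q<t_n$ already yields $v_s(y)\ge t_n$: given rational $q'<t_n$, pick rational $q$ with $q'<q<t_n$ and observe $|y|/s^{q'}\le s^{q-q'}\approx 0$. This is the same computation as your inner sandwich but with the quantifier on $q$ reversed, so it deserves its own sentence rather than a pointer to the earlier clause.
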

\begin{proof} (i) follows directly from the fact that $\widehat{\mathbb R}$ is a real closed field (Axiom 2). The properties (ii)-(iv) hold in any ultra-metric space (Lightstone \& Robinson~\cite{LiRob}, Ch.1, \S4). 
\end{proof}
\begin{example} Let $s$ be a scale for $\widehat{\mathbb R}$ and $x$ be a (finite) number in $\widehat{\mathbb R}$ of the form $x=r+h$, where $r\in\mathbb R$ and $v_s(h)>0$. Then $s^x=s^r\sum_{n=0}^\infty\, \frac{(\ln s)^n h^n}{n!}$ (the series is convergent by part (iv) of the above theorem).
\end{example}
\section{Generalized Functions in Axioms}\label{S: Generalized Functions in Axioms}

	We define a differential algebra $\widehat{\mathcal E(\Omega)}$ over the field $\widehat{\mathbb C}$ axiomatically. We complete this section with an open problem. In what follows we denote by $\mathcal T^d$ the usual topology on $\mathbb R^d$.

\begin{definition}[$\mathcal C^\infty$-Functions on a Monad] \label{D: Cinfinity-Functions on a Monad} Let $\Omega$ be an open subset of $\mathbb{R}^d$. 
\begin{D-enum}

\item The monad of $\Omega$ in $\widehat{\mathbb{R}}^d$ is  $
\widehat{\mu}(\Omega)=\big\{r+h:
r\in\Omega,\; h\in\widehat{\mathbb{R}}^d, |h|\approx 0\big\}$.
\item We denote by $\mathcal{C}^\infty(\widehat{\mu}(\Omega), \widehat{\mathbb{C}})$ the ring of the $\mathcal{C}^\infty$-functions from $\widehat{\mu}(\Omega)$ to $\widehat{\mathbb{C}}$ (i.e. $\mathcal{C}^\infty(\widehat{\mu}(\Omega), \widehat{\mathbb{C}})$  consists of all functions from $\widehat{\mu}(\Omega)$ to $\widehat{\mathbb{C}}$ with continuous partial derivatives of all orders). Let $\mathcal O$ be an open subset of $\Omega$. We define a \textbf{restriction} $f\rest\mathcal O\in\mathcal{C}^\infty(\widehat{\mu}(\mathcal O), \widehat{\mathbb{C}})$ by $(f\rest\mathcal O)(x)=f(x)$ for $x\in\widehat{\mu}(\mathcal O)$. We denote by $\supp(f)\subseteq\Omega$ the \textbf{support} of $f\in\mathcal{C}^\infty(\widehat{\mu}(\Omega), \widehat{\mathbb{C}})$.
\end{D-enum}
\end{definition}

\begin{theorem}[Algebra of $\mathcal C^\infty$-Functions] 
\begin{T-enum}
\item $\mathcal{C}^\infty(\widehat{\mu}(\Omega), \widehat{\mathbb{C}})$ is a \textbf{differential algebra over  the field}\; $\widehat{\mathbb{C}}$. 
\item The family  $\big\{\mathcal{C}^\infty(\widehat{\mu}(\Omega), \widehat{\mathbb{C}})\big\}_{\Omega\in\mathcal T^d}$ is a \textbf{sheaf of differential algebras over the field} $\widehat{\mathbb{C}}$. 
\end{T-enum}
\end{theorem}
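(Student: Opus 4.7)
The proof falls into two parts, both of which are essentially routine once the right observations are in place.

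For part (i), the plan is to verify that the pointwise sum $f+g$, $\widehat{\mathbb C}$-scalar multiple $cf$, and pointwise product $fg$ of two functions in $\mathcal C^\infty(\widehat{\mu}(\Omega), \widehat{\mathbb C})$ again belong to $\mathcal C^\infty(\widehat{\mu}(\Omega), \widehat{\mathbb C})$. Since the field operations on $\widehat{\mathbb C}$ are continuous in the product-order topology by Theorem~\ref{T: Algebraic and Topological Properties}, all standard calculus arguments (linearity of the limit, Leibniz rule for difference quotients) transfer verbatim to this setting. This shows existence and continuity of the partial derivatives $\partial^\alpha(f+g)$, $\partial^\alpha(cf)$, $\partial^\alpha(fg)$ of every order, together with the identities $\partial_i(f+g)=\partial_i f+\partial_i g$, $\partial_i(cf)=c\partial_i f$, $\partial_i(fg)=(\partial_i f)g+f(\partial_i g)$. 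Thus $\mathcal C^\infty(\widehat{\mu}(\Omega), \widehat{\mathbb C})$ is a commutative $\widehat{\mathbb C}$-algebra on which each $\partial_i$ acts as a $\widehat{\mathbb C}$-linear derivation, i.e., it is a differential algebra over $\widehat{\mathbb C}$.

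For part (ii), I would first establish the \emph{uniqueness of the standard part}: if $x=r+h=r'+h'\in\widehat{\mu}(\Omega)$ with $r,r'\in\Omega\subseteq\mathbb R^d$ and $|h|,|h'|\approx 0$, then $r-r'=h'-h$ is an infinitesimal element of $\mathbb R^d$, hence $r=r'$ by the lemma giving $\mathbb R\cap\mathcal I(\widehat{\mathbb R})=\{0\}$. Consequently the map $\widehat{\mu}(\Omega)\to\Omega$, $x\mapsto r(x)$, is well-defined, and for any open cover $\{\mathcal O_\alpha\}_{\alpha\in A}$ of $\Omega$ we have the set-theoretic identity $\widehat{\mu}(\Omega)=\bigcup_{\alpha\in A}\widehat{\mu}(\mathcal O_\alpha)$, with $\widehat{\mu}(\mathcal O_\alpha)\cap\widehat{\mu}(\mathcal O_\beta)=\widehat{\mu}(\mathcal O_\alpha\cap\mathcal O_\beta)$. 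Together with the obvious transitivity $(f\rest\mathcal O)\rest\mathcal O'=f\rest\mathcal O'$ for $\mathcal O'\subseteq\mathcal O\subseteq\Omega$, this makes $\{\mathcal C^\infty(\widehat{\mu}(\Omega), \widehat{\mathbb C})\}_{\Omega\in\mathcal T^d}$ a presheaf of differential algebras over $\widehat{\mathbb C}$.

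Next I would verify the two sheaf axioms. For \emph{locality}, if $f\rest\mathcal O_\alpha=0$ for every $\alpha$, then any $x=r+h\in\widehat{\mu}(\Omega)$ has $r\in\mathcal O_\alpha$ for some $\alpha$, so $x\in\widehat{\mu}(\mathcal O_\alpha)$ and $f(x)=0$; hence $f=0$. For \emph{gluing}, given a compatible family $f_\alpha\in\mathcal C^\infty(\widehat{\mu}(\mathcal O_\alpha), \widehat{\mathbb C})$ with $f_\alpha\rest(\mathcal O_\alpha\cap\mathcal O_\beta)=f_\beta\rest(\mathcal O_\alpha\cap\mathcal O_\beta)$, I define $f:\widehat{\mu}(\Omega)\to\widehat{\mathbb C}$ by $f(x)=f_\alpha(x)$ whenever $r(x)\in\mathcal O_\alpha$. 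Well-definedness follows from the compatibility condition together with the identity $\widehat{\mu}(\mathcal O_\alpha)\cap\widehat{\mu}(\mathcal O_\beta)=\widehat{\mu}(\mathcal O_\alpha\cap\mathcal O_\beta)$.

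The main (and only) technical point I anticipate is verifying that the glued function $f$ is itself in $\mathcal C^\infty(\widehat{\mu}(\Omega), \widehat{\mathbb C})$. I plan to argue locally: at any $x_0=r_0+h_0$ with $r_0\in\mathcal O_\alpha$, choose a Euclidean ball $B\subseteq\mathcal O_\alpha$ around $r_0$; then every point of $\widehat{\mu}(B)\subseteq\widehat{\mu}(\mathcal O_\alpha)$ is a neighborhood of $x_0$ in $\widehat{\mu}(\Omega)$ (with respect to the product-order/sharp topology) on which $f$ coincides with the smooth function $f_\alpha$. Since smoothness is defined through pointwise limits of difference quotients, $f$ inherits continuous partial derivatives of all orders at $x_0$, and the usual uniqueness of partial derivatives guarantees that $\partial^\beta f$ is globally well-defined. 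This is the only place where the argument is not purely formal; what makes it go through is precisely the uniqueness of the standard part together with the fact that the monads $\widehat{\mu}(\mathcal O_\alpha)$ form an open cover of $\widehat{\mu}(\Omega)$ in the relevant topology.
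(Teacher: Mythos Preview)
Your proposal is correct. The paper itself does not give a proof of this theorem at all: immediately after the statement it says ``The above result follows immediately and we leave it to the reader'' (Remark~\ref{R: Warning}). So there is no ``paper's own proof'' to compare against; you have simply supplied the routine verification that the author chose to omit.

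Your argument is sound in both parts. The key observation you isolate for part~(ii)---uniqueness of the standard part via $\mathbb R\cap\mathcal I(\widehat{\mathbb R})=\{0\}$, yielding $\widehat{\mu}(\Omega)=\bigcup_\alpha\widehat{\mu}(\mathcal O_\alpha)$ and $\widehat{\mu}(\mathcal O_\alpha)\cap\widehat{\mu}(\mathcal O_\beta)=\widehat{\mu}(\mathcal O_\alpha\cap\mathcal O_\beta)$---is exactly what makes the sheaf axioms go through, and your local argument that $\widehat{\mu}(B)$ is an open neighborhood of $x_0$ in the order topology (because a real $\epsilon$-ball around $x_0$ has standard part landing in $B$) is the right way to see that the glued $f$ is smooth. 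One small wording slip: you wrote ``every point of $\widehat{\mu}(B)\ldots$ is a neighborhood of $x_0$''; you mean that $\widehat{\mu}(B)$ itself is a neighborhood of $x_0$.
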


\begin{remark}[Warning]\label{R: Warning} The above  result follows immediately and we leave it to the reader. We should note that neither the \emph{fundamental theorem of calculus}, nor the \emph{mean value theorem} hold in $\mathcal C^\infty(\widehat{\mu}(\Omega),\; \widehat{\mathbb C})$. For example, let $\Omega$ is an open connected subset of $\mathbb R^d$ and $r\in\Omega$. Then the monad $\widehat{\mu}(\Omega)$ is also an open connected subset of $\widehat{\mathbb R}^d$. However, the function $f: \widehat{\mu}(\Omega)\to\widehat{\mathbb C}$, defined by $f(x)=1$ for $x\approx r$, and $f(x)=0$ otherwise, is (obviously) not a constant function, but still $\nabla f=0$ on $\widehat{\mu}(\Omega)$. 
\end{remark}

	Let $\widehat{\mathcal E(\Omega)}$ be a \emph{differential subalgebra of} $\mathcal C^\infty(\widehat{\mu}(\Omega),\; \widehat{\mathbb C})$ over the field $\widehat{\mathbb C}$ satisfying the following axioms:

\begin{description}

\item[Axiom 6 (Constant Functions)] The set of the \emph{constant functions} on $\widehat{\mu}(\Omega)$ coincides exactly with $\widehat{\mathbb C}$. 

\item[Axiom 7 (MVTh)] The \emph{Mean Value Theorem} (from multivariable calculus) holds in $\widehat{\mathcal E(\Omega)}$.

\item[Axiom 8 (Colombeau Embedding)] There exists a \emph{Colombeau type of embedding} $E_\Omega: \mathcal D^\prime(\Omega)\to \widehat{\mathcal E(\Omega)}$, i.e. $E_\Omega$ is an injective  homomorphism of differential vector spaces over $\mathbb C$ such that:
\begin{description} 
\item[(a)] $E_\Omega[\mathcal D^\prime(\Omega)]$ is a differential vector subspace of $\widehat{\mathcal E(\Omega)}$ over $\mathbb C$.
\item[(b)] $(E_\Omega\circ S_\Omega)[\mathcal E(\Omega)]$ is a differential subalgebra of $\widehat{\mathcal E(\Omega)}$ over $\mathbb C$ (see the end of the Introduction).  
\end{description}
\end{description}
\begin{remark}[Notational Simplification] \label{R: Notational Simplification} We often simplify the notation by letting  $(E_\Omega\circ S_\Omega)[\mathcal E(\Omega)]=\mathcal E(\Omega)$ and $E_\Omega[\mathcal D^\prime(\Omega)]= \mathcal D^\prime(\Omega)$ and summarize the above axiom with the inclusions:  $
\mathcal E(\Omega)\subset\mathcal D^\prime(\Omega)\subset\widehat{\mathcal E(\Omega)}$.
\end{remark}

	Let $\mathcal{R}_{c}(\Omega)$ denote the set of all \emph{measurable relatively compact subsets of} $\Omega$. 

\begin{description}
\item [Axiom 9 (Integral)]   There exists a mapping  $\widehat{I}: \widehat{\mathcal E(\Omega)}\times\mathcal{R}_{c}(\Omega)\to\widehat{\mathbb C}$ such that:

\begin{description}
\item[(a)]  $\widehat{I}$ is \textbf{$\widehat{\mathbb C}$-linear} in the first variable. 

\item[(b)] $\widehat{I}$ is \textbf{additive} in the second variable in the sense that \emph{if} $\{\Omega_\gamma\}_{\gamma\in\Gamma}$ is a family of pairwise disjoint sets in $\mathcal{R}_{c}(\Omega)$ such that $\bigcup_{\gamma\in\Gamma} \Omega_\gamma\in\mathcal{R}_{c}(\Omega)$ and \emph{if} $\sum_{\gamma\in\Gamma}\;\widehat{I}(\Omega_\gamma, f)$ is convergent in $\widehat{\mathbb C}$, \emph{then} 
\[
\widehat{I}(\bigcup_{\gamma\in\Gamma} \Omega_\gamma, f)=\sum_{\gamma\in\Gamma}\;\widehat{I}(\Omega_\gamma, f).
\]
\item[(c)] Let $T\in\mathcal D^\prime(\Omega)$ and $\tau\in\mathcal D(\Omega)$. Then $\widehat{I}(E_\Omega(T), \mathcal O)=\bra T\, |\, \tau\ket$ for all open $\mathcal O\in\mathcal{R}_{c}(\Omega)$ such that either $\supp(T)\subset\mathcal O$ or $\supp(\tau)\subset\mathcal O$.
\end{description}
\end{description}

\begin{example} We shall often write simply $\int_\mathcal O\, f(x)\, dx$ instead of the more precise $\widehat{I}(f,\mathcal O)$. For example, for every  $\tau\in\mathcal D(\Omega)$ we have $\int_{\mathbb R^d}\, \delta(x)\tau(x)\, dx=\tau(0)$, a notation used often by non-mathematicians. 
\end{example}

\begin{theorem}[Fundamental Theorem of Calculus] Let $\Omega$ be an open connected subset of $\mathbb R^d$ and let $\nabla f(x)=0$ for some $f\in\widehat{\mathcal E(\Omega)}$ and all $x\in\mu(\Omega)$. Then $f(x)=c$ for some $c\in\widehat{\mathbb C}$ and all $x\in\mu(\Omega)$. 
\end{theorem}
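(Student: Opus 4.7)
The plan is to use the Mean Value Theorem (Axiom 7) twice: once to kill variation inside each ``infinitesimal monad'' around a real point, and once to kill variation between nearby real points in $\Omega$. The Warning (Remark~\ref{R: Warning}) shows this step must genuinely use that $f$ lies in the subalgebra $\widehat{\mathcal E(\Omega)}$ rather than merely in $\mathcal C^\infty(\widehat{\mu}(\Omega),\widehat{\mathbb C})$.

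First, I would fix $r\in\Omega$ and consider any two points $x=r+h_1$, $y=r+h_2$ in the infinitesimal monad $\widehat{\mu}(\{r\})=\{r+h:|h|\approx 0\}$. For every $t\in[0,1]\subset\widehat{\mathbb R}$ the point $(1-t)x+ty=r+(1-t)h_1+th_2$ is still infinitesimally close to $r$, hence lies in $\widehat{\mu}(\Omega)$; so the segment joining $x$ to $y$ is contained in $\widehat{\mu}(\Omega)$. Applying Axiom~7 to $f$ (componentwise, if need be, to reduce to the scalar case) yields a $\xi$ on this segment with $f(y)-f(x)=\nabla f(\xi)\cdot(y-x)=0$. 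Thus $f$ is constant on each monad $\widehat{\mu}(\{r\})$, with value $g(r):=f(r)\in\widehat{\mathbb C}$.

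Next, I would show that $g:\Omega\to\widehat{\mathbb C}$ is locally constant. If $r_1,r_2\in\Omega$ and the straight segment $\sigma$ from $r_1$ to $r_2$ lies entirely in $\Omega$, then $\sigma\subseteq\widehat{\mu}(\Omega)$ and Axiom~7 again produces a point $\xi\in\sigma$ with $g(r_2)-g(r_1)=f(r_2)-f(r_1)=\nabla f(\xi)\cdot(r_2-r_1)=0$. Hence $g$ is constant on every open ball contained in $\Omega$. Since $\Omega$ is open and connected in $\mathbb R^d$, it is polygonally connected, and any two points of $\Omega$ can be joined by a finite chain of balls inside $\Omega$; iterating gives $g(r_1)=g(r_2)$ for all $r_1,r_2\in\Omega$. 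Calling this common value $c\in\widehat{\mathbb C}$ and combining with the monadic constancy from the previous step, we obtain $f(r+h)=g(r)=c$ for every $r\in\Omega$ and every infinitesimal $h\in\widehat{\mathbb R}^d$, i.e.\ $f\equiv c$ on $\widehat{\mu}(\Omega)$, as required. (Axiom~6 is consistent with, though not strictly needed for, the conclusion, since $c$ is manifestly an element of $\widehat{\mathbb C}$.)

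The only subtle point—hence the ``main obstacle''—is making the first step go through: one must be sure the MVT of Axiom~7 is available along segments whose parameter $t$ runs through $[0,1]\subseteq\widehat{\mathbb R}$ with both endpoints possibly having infinitesimal but non-real coordinates. Once the segment-containment in $\widehat{\mu}(\Omega)$ is verified (which is easy because convex combinations of infinitesimals are infinitesimal), Axiom~7 does all the work and the global argument reduces to standard polygonal connectedness in $\mathbb R^d$.
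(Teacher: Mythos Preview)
Your argument is correct and follows the same approach as the paper, which simply records that the result is ``an immediate consequence of the Axiom~7'' without spelling out any details. You have merely made explicit the two natural uses of the Mean Value Theorem (within a single monad and along polygonal paths joining real base points) that are implicit in that one-line justification.
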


\begin{proof} An immediate consequence of the Axiom 7.
\end{proof}

\begin{definition}[Regular Algebra]  A differential subalgebra $\mathcal R(\Omega)$ of $\widehat{\mathcal E(\Omega)}$ is called \emph{regular} if:
\begin{description}

\item[(a)]  $(E_\Omega\circ S_\Omega)[\mathcal E(\Omega)]$ is a differential subring of $\mathcal R(\Omega)$.

\item[(b)] $\mathcal R(\Omega)\cap E_\Omega[\mathcal D^\prime(\Omega)]=(E_\Omega\circ S_\Omega)[\mathcal E(\Omega)]$.
\end{description}
If a regular algebra $\mathcal R(\Omega)$ is maximal (under inclusion), we shall write $\mathcal R^\infty(\Omega)$ instead of $\mathcal R(\Omega)$.
\end{definition}

	It is clear that $(E_\Omega\circ S_\Omega)[\mathcal E(\Omega)]$ is a regular subalgebra of $\widehat{\mathcal E(\Omega)}$ (in a trivial way).

\begin{description}
\item[Axiom 10 (Regularity)]\label{A: Regularity} There exists a regular subalgebra $\mathcal R(\Omega)$ of $\widehat{\mathcal E(\Omega)}$ which is a proper extension of $(E_\Omega\circ S_\Omega)[\mathcal E(\Omega)]$, in symbol, \newline $(E_\Omega\circ S_\Omega)[\mathcal E(\Omega)]\subsetneqq\mathcal R(\Omega)$.
\end{description}

	In the simplified notation (Remark~ \ref{R: Notational Simplification}) the above conditions can be written simply as: $\mathcal E(\Omega)\subseteq\mathcal R(\Omega)$, $\mathcal R(\Omega)\cap \mathcal D^\prime(\Omega)=\mathcal E(\Omega)$ and $\mathcal E(\Omega)\subsetneqq\mathcal R(\Omega)$, respectively.

\begin{description}
\item[Axiom 11 (Maximality Principle)] The algebra $\widehat{\mathcal E(\Omega)}$ is a \textbf{maximal in} $\mathcal C^\infty(\widehat{\mu}(\Omega),\; \widehat{\mathbb C})$  in the sense that there is no a differential subalgebra of $\mathcal C^\infty(\widehat{\mu}(\Omega),\; \widehat{\mathbb C})$ over $\widehat{\mathbb C}$ which is a proper extension of $\widehat{\mathcal E(\Omega)}$ and  which also satisfies Axioms 6-10.
\end{description}
	

\begin{theorem}[Uniqueness] Axioms 5-11 determines $\widehat{\mathcal E(\Omega)}$ uniquely up to isomorphism (of differential algebras over the field $\widehat{\mathbb C}$ satisfying Axioms 6-11).
\end{theorem}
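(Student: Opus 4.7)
The strategy is to leverage Axiom 11 (Maximality) to upgrade ``uniqueness up to isomorphism'' into literal equality as subalgebras of the ambient algebra $\mathcal C^\infty(\widehat{\mu}(\Omega), \widehat{\mathbb C})$. By Theorem~\ref{T: Uniqueness of Scalars}, the field $\widehat{\mathbb C}$ is canonically determined, hence so is $\mathcal C^\infty(\widehat{\mu}(\Omega), \widehat{\mathbb C})$. Suppose $\mathcal A_1$ and $\mathcal A_2$ are two differential $\widehat{\mathbb C}$-subalgebras of $\mathcal C^\infty(\widehat{\mu}(\Omega), \widehat{\mathbb C})$ satisfying Axioms 6-11, with respective data $(E_\Omega^i, \widehat{I}^i, \mathcal R^i(\Omega))$. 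I would exhibit a single differential subalgebra $\mathcal B \supseteq \mathcal A_1 \cup \mathcal A_2$ of $\mathcal C^\infty(\widehat{\mu}(\Omega), \widehat{\mathbb C})$ satisfying Axioms 6-10. Once $\mathcal B$ is in hand, Axiom 11 applied to $\mathcal A_1$ gives $\mathcal B = \mathcal A_1$, and symmetrically $\mathcal B = \mathcal A_2$, so $\mathcal A_1 = \mathcal A_2$ literally, which is strictly stronger than isomorphism.

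The bulk of Axioms 6, 8, 9, 10 pass easily to the differential subalgebra $\mathcal B_0$ generated by $\mathcal A_1 \cup \mathcal A_2$. Axiom 6 is inherited because the constant functions in $\mathcal C^\infty(\widehat{\mu}(\Omega), \widehat{\mathbb C})$ are exactly $\widehat{\mathbb C}$. For Axiom 8, the composition $E_\Omega^1$ followed by the inclusion $\mathcal A_1 \hookrightarrow \mathcal B_0$ is a Colombeau embedding into $\mathcal B_0$. For Axiom 10, $\mathcal R^1(\Omega) \subseteq \mathcal B_0$ is still a regular subalgebra, since the intersection with $E_\Omega^1[\mathcal D^\prime(\Omega)]$ is unchanged (new products of distributions lie outside the $\mathcal D^\prime$-image). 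For Axiom 9, condition (c) forces $\widehat{I}^1$ and $\widehat{I}^2$ to agree on the common distributional and smooth parts, and a Zorn-lemma extension of the $\widehat{\mathbb C}$-linear, countably additive functional from $\mathcal A_1 \cup \mathcal A_2$ to $\mathcal B_0$ produces the required integral.

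The main obstacle is verifying Axiom 7 (the Mean Value Theorem) for $\mathcal B_0$. MVTh is not preserved under generation of algebras: for a mixed product $fg$ with $f \in \mathcal A_1$ and $g \in \mathcal A_2$, the gradient $\nabla(fg) = g\,\nabla f + f\,\nabla g$ does not permit a common mean-value point to be extracted from MVTh in $\mathcal A_1$ or $\mathcal A_2$ separately. My plan to circumvent this is to replace $\mathcal B_0$ by a Zorn-maximal differential $\widehat{\mathbb C}$-subalgebra $\mathcal B$ of $\mathcal C^\infty(\widehat{\mu}(\Omega), \widehat{\mathbb C})$ that contains $\mathcal A_1 \cup \mathcal A_2$ and admits data $(E_\Omega, \widehat I, \mathcal R(\Omega))$ satisfying Axioms 6-10; the delicate points are (a) that each chain of such candidates admits an upper bound, for which MVTh must be shown to survive directed unions (this uses that MVTh for a fixed $f$ takes place at some finite stage), and (b) that the initial candidate $\mathcal B_0$ extends to at least one such $\mathcal B$, which I expect to follow from Cantor completeness of $\widehat{\mathbb R}$ (Axiom 2) and the order-topological structure of the monad. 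This Zorn step is where I anticipate the principal difficulty in turning the sketch into a complete proof.
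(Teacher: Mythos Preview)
The paper does not prove this theorem at all: its ``proof'' consists of the single sentence ``This is an \emph{open problem} in our algebraic (axiomatic) approach.'' So there is no argument in the paper for you to be compared against; the statement is posed as a conjecture, not a result.

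Your sketch correctly locates the obstruction at Axiom~7, but it does not overcome it, and the Zorn manoeuvre you propose is circular. You want to apply Zorn to the poset of differential $\widehat{\mathbb C}$-subalgebras of $\mathcal C^\infty(\widehat{\mu}(\Omega),\widehat{\mathbb C})$ that contain $\mathcal A_1\cup\mathcal A_2$ and satisfy Axioms~6--10. Granting that chains have upper bounds (your point~(a), which is plausible since MVTh is a per-function statement), Zorn still requires the poset to be nonempty. Your point~(b) is precisely the assertion that it is nonempty, and you give no argument for it beyond an appeal to Cantor completeness of $\widehat{\mathbb R}$. But nonemptiness is the entire content of the theorem: once you have \emph{any} common extension $\mathcal B\supseteq\mathcal A_1\cup\mathcal A_2$ satisfying Axioms~6--10, maximality finishes the job immediately, and Zorn adds nothing. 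The genuine difficulty is exactly that the subalgebra generated by $\mathcal A_1\cup\mathcal A_2$ need not satisfy MVTh (as you yourself observe for products $fg$ with $f\in\mathcal A_1$, $g\in\mathcal A_2$), and no mechanism in your outline produces a candidate $\mathcal B$ in which MVTh holds.

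A secondary gap: your extension of the integral $\widehat I$ by ``a Zorn-lemma extension of the $\widehat{\mathbb C}$-linear, countably additive functional'' is not straightforward. Hahn--Banach style extensions preserve linearity, but the additivity condition (b) of Axiom~9 is a $\sigma$-additivity constraint in the second variable, and there is no general extension principle that simultaneously respects linearity in the first variable and countable additivity in the second. In short, your proposal identifies the right difficulty but does not resolve it; this is consistent with the paper's own assessment that the uniqueness question remains open.
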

\begin{proof} This is an \emph{open problem} in our algebraic (axiomatic) approach.
\end{proof}

\section{Consistency of All Axioms}\label{S: Consistency of All Axioms}
We already show that Axioms 1-5 are consistent (Theorem~\ref{T: Existence of Scalars}). Here we show that Axioms 1-10 are also consistent by way of a model. 

\begin{theorem}[Consistency]\label{T: Consistency} Axiom 1-10 are consistent (under ZFC and the generalized continuum hypothesis $2^\mathfrak{c}=\mathfrak{c}^+$, where $\mathfrak{c}=\card(\mathbb{R})$).
\end{theorem}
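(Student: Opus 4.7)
The plan is to exhibit an explicit model, essentially the nonstandard construction of Oberguggenberger \& Todorov~[OberTod98] (with the standard counterpart of Todorov \& Vernaeve~[TodVern08] as a back-up). Start with a $\mathfrak{c}^+$-saturated nonstandard extension $^*\mathbb R$ and a positive infinitesimal $\rho\in{^*\mathbb R}$, set $\widehat{\mathbb R} = {^\rho\mathbb R}$ and $\widehat{\mathbb C} = \widehat{\mathbb R}(i)$. By Theorem~\ref{T: Existence of Scalars} this gives a model for Axioms~1--5, with scale $s = \widehat{\rho}$, so it remains to construct $\widehat{\mathcal E(\Omega)}$ for each open $\Omega\subseteq\mathbb R^d$ in a way consistent with Axioms~6--10.

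The construction proceeds by a Colombeau-type quotient performed \emph{internally}. Let $\mathcal M(\Omega)$ be the set of internal $^*\mathcal C^\infty$ functions $F : {^*\Omega}\to{^*\mathbb C}$ such that, on each $^*$-open set $^*K$ with $K\subset\subset\Omega$, every partial derivative $\partial^\alpha F$ is bounded by $\rho^{-n}$ for some $n\in\mathbb N$ (depending on $K,\alpha$); let $\mathcal N(\Omega)\subset\mathcal M(\Omega)$ be the ideal of those $F$ whose derivatives are bounded on each $^*K$ by $\rho^{n}$ for \emph{every} $n\in\mathbb N$. The quotient ring $\mathcal M(\Omega)/\mathcal N(\Omega)$ is a differential algebra over $\widehat{\mathbb C}$, and each equivalence class has a well-defined action on $\widehat{\mu}(\Omega)$, producing an element of $\mathcal C^\infty(\widehat{\mu}(\Omega),\widehat{\mathbb C})$; this identifies $\widehat{\mathcal E(\Omega)}$ with a differential subalgebra of $\mathcal C^\infty(\widehat{\mu}(\Omega),\widehat{\mathbb C})$. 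Axiom~6 follows from the $\rho$-moderate/$\rho$-null characterization: an internal smooth $F$ whose values on $\widehat{\mu}(\Omega)$ all collapse to a single number in $\widehat{\mathbb C}$ is $\mathcal N$-equivalent to a constant. Axiom~7 is an immediate transfer of the classical MVTh applied inside $\mathcal M(\Omega)$, and Axiom~9 is obtained by reducing the internal Lebesgue integral on $^*K$ modulo $\mathcal N(\Omega)$.

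For Axiom~8, use an internal mollifier $\phi\in{^*\mathcal D(\mathbb R^d)}$ satisfying $\int_{^*\mathbb R^d}\phi(x)\,dx = 1$ and $\int_{^*\mathbb R^d} x^\alpha\phi(x)\,dx = 0$ for every multi-index $\alpha$ with $1\le|\alpha|\le\omega$ for some infinite $\omega\in{^*\mathbb N}$; such a $\phi$ is furnished by $\mathfrak{c}^+$-saturation applied to the countable collection of finite-order moment conditions (each realizable over $\mathbb R$). Setting $\phi_\rho(x) = \rho^{-d}\phi(x/\rho)$ and defining $E_\Omega(T)$ as the class of the internal convolution $^*T \conv \phi_\rho$ (after an internal partition-of-unity cutoff subordinate to $\Omega$), one checks: $E_\Omega$ is $\mathbb C$-linear, commutes with partial derivatives, and is injective because $\widehat{\st}\bigl(\widehat{I}(E_\Omega(T)\tau,\mathcal O)\bigr) = \langle T,\tau\rangle$ for $\tau\in\mathcal D(\Omega)$, which also gives Axiom~9(c). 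The vanishing moment condition forces $E_\Omega\circ S_\Omega$ to be a ring homomorphism on $\mathcal E(\Omega)$, since $(f\conv\phi_\rho)(g\conv\phi_\rho) - (fg)\conv\phi_\rho\in\mathcal N(\Omega)$ whenever $f,g\in\mathcal C^\infty(\Omega)$.

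The main obstacle is the simultaneous coordination in Axiom~8 plus Axiom~10. The vanishing-moment internal mollifier is what makes $\mathcal E(\Omega)$ into a subalgebra; but then Axiom~10 demands a regular subalgebra $\mathcal R(\Omega)$ with $\mathcal R(\Omega)\cap\mathcal D^\prime(\Omega) = \mathcal E(\Omega)$, properly extending $\mathcal E(\Omega)$. The natural candidate is the $^*\mathcal C^\infty$-bounded (sharply bounded) subalgebra consisting of classes $[F]$ such that every $\partial^\alpha F$ is \emph{uniformly} $\rho$-moderate on each compact set (i.e., the order $n$ does not depend on $\alpha$). Showing this algebra intersects $E_\Omega[\mathcal D^\prime(\Omega)]$ in exactly $\mathcal E(\Omega)$ is the deep step: it is essentially the $\mathcal G^\infty$ regularity theorem of Oberguggenberger, which in the present model reduces to a local Sobolev-regularity argument combined with overspill. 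Finally, sheaf compatibility across varying $\Omega$ and the maximality Axiom~11 (as far as it is used in the consistency statement) follow by a Zorn's-lemma extension of $\widehat{\mathcal E(\Omega)}$ inside $\mathcal C^\infty(\widehat{\mu}(\Omega),\widehat{\mathbb C})$ preserving Axioms~6--10; so the entire package is consistent, with all verifications ultimately deferred to the explicit models cited.
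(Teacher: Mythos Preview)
Your proposal is correct and follows the paper's own approach: both invoke the nonstandard model of Oberguggenberger \& Todorov (with Todorov \& Vernaeve as the standard-analysis alternative), and indeed the paper's proof consists of little more than pointers to those references together with one explicit formula for the regular subalgebra. The one substantive difference is your candidate for Axiom~10: you use the $\mathcal G^\infty$-type algebra (uniform $\rho$-moderateness, i.e.\ a single exponent $n$ bounding all $\partial^\alpha F$), whose intersection property $\mathcal R(\Omega)\cap\mathcal D'(\Omega)=\mathcal E(\Omega)$ genuinely requires Oberguggenberger's regularity theorem as you note; the paper instead takes
\[
\mathcal R(\Omega)=\bigl\{f\in\widehat{\mathcal E(\Omega)}:\ (\forall x\in\widehat\mu(\Omega))(\forall\alpha\in\mathbb N_0^d)(\exists n\in\mathbb N)\ |\partial^\alpha f(x)|\le n\bigr\},
\]
the algebra of generalized functions with all derivatives \emph{finite}, for which the intersection property admits a more elementary verification. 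Both choices work; the paper's buys simplicity, yours is the larger and more canonical regular algebra. Finally, your closing discussion of Axiom~11 via Zorn's lemma is extraneous here, since the theorem asserts consistency only of Axioms~1--10.
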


\begin{proof}[Proof 1 (Within Non-Standard Analysis)]  For  a model of Axioms 1-10 in the framework of Robinson's non-standard analysis we refer to (Oberguggenberger \& Todorov~\cite{OberTod98}). 
\end{proof}
\begin{proof}[Proof 2 (Within Standard Analysis)] For  a model of Axioms 1-10 in the framework of standard analysis we refer to (Todorov \& Vernaeve~\cite{TodVern08}).  For a discussion of the same model we also refer to (Todorov~\cite{tdTodorovAxioms11}, \S 4). 

	In both models (mentioned in Proof 1 and Proof 2)
{\small\[
\mathcal R(\Omega)=\{f\in \widehat{\mathcal E(\Omega)}: (\forall x\in\widehat{\mu}(\Omega))(\forall\alpha\in\mathbb N_0^d)(\exists n\in\mathbb N)(|\partial^\alpha f(x)|\leq n) \},
\]}
offers an example for a non-trivial regular subalgebra of $\widehat{\mathcal E(\Omega)}$ over the field $\mathbb C$ (which guarantees the consistency of Axiom 10 with the rest of the axioms).
\end{proof}
\section{Partial Independence of Axioms}\label{S: Partial Independence of Axioms}

	Like most of the systems of axioms in mathematics, our axioms are not independent. For example, the formula $s^{q+h}=s^q\sum_{n=0}^\infty\, \frac{(\ln s)^n h^n}{n!}$ holds for all $q\in\mathbb Q$ and all $h\in\widehat{\mathbb R}$ with $v_s(h)>0$, without the help of Axiom 5. Indeed, $s^q$ is well defined simply because $\widehat{\mathbb R}$ is a real closed field by Axiom 2 and the series is convergent in $\widehat{\mathbb R}$ because $v_s(\frac{(\ln s)^n h^n}{n!})=v_s(h)\,n\to \infty$ which is a consequence of Axiom 4. It is not worth the effort to isolate an independent system of axioms which would result in a considerable complication of the language (it is not accidental that the axioms in the axiomatic definition of ``group'', for example, are not independent, nor are the axioms for $\mathbb R$). Instead of trying to isolate independent axioms, in this section we shall restrict ourselves to the less ambitious task of showing that some particular subsets of our axioms - with one of the axioms replaced by its negation - are consistent. We refer to this process as a \emph{partial independence}. The purpose is to create a feeling for the role of each axiom relative to the rest of the axioms. 

	If $P$ is a proposition, then $\neg P$ stands for \emph{ its negation}.
\begin{theorem}[Partial Independence] The following subsets of Axiom 1-10 are consistent:
\begin{T-enum}
\item $\{\neg \text{Axiom } 1, \text{Axiom }2, \text{Axiom }3, \text{Axiom }4, \text{Axiom }5\}$ are consistent. 
\item $\{\text{Axiom } 1, \text{Axiom }2, \text{Axiom }3,\; \neg\text{Axiom }4, \text{Axiom }5\}$ are consistent. 
\item $\{\text{Axiom } 6,\; \neg\text{Axiom }7, \text{Axiom }8,\; \text{Axiom }9, \text{Axiom }10\}$ are consistent. 
\end{T-enum}
\end{theorem}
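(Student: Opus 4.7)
The plan is to exhibit an explicit model for each of the three cases, in each case leaning on constructions already developed earlier in the paper.

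For (i), I would start with a non-standard extension $^*\mathbb R$ of $\mathbb R$ with $\card({^*\mathbb R})=\mathfrak{c}^{++}$ (obtained under GCH from a suitably saturated ultrapower, as in Cavalcante~\cite{rCav}) and form the Robinson field $^\rho\mathbb R$ at a fixed positive infinitesimal $\rho\in{^*\mathbb R}$. The argument of Theorem~\ref{T: Existence of Scalars} applies verbatim to show that $^\rho\mathbb R$ is real closed and Cantor complete, contains $\mathbb R$, admits $\widehat{\rho}$ as an infinitesimal scale, and supports exponentiation $\exp_s$; hence Axioms 2--5 hold. But $\card({^\rho\mathbb R})=\mathfrak{c}^{++}\neq\mathfrak{c}^{+}$, so Axiom 1 fails.

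For (ii), take $\widehat{\mathbb R}={^*\mathbb R}$ to be a real closed algebraically saturated non-standard extension with $\card({^*\mathbb R})=\mathfrak{c}^+$, whose existence and uniqueness are guaranteed by Theorem~\ref{T: First Uniqueness Result}. Axioms 1 and 3 are immediate, and algebraic saturation implies Cantor completeness, yielding Axiom 2. For $\neg$\,Axiom 4, fix any positive infinitesimal $\rho$; the countable family $\{(\rho^{-n},\infty)\}_{n\in\mathbb N}$ has the finite intersection property, so countable saturation produces a common element $M\in{^*\mathbb R}$ with $\rho^{-n}<M$ for every $n$, showing that $(\rho^{-n})$ is bounded and that no scale exists. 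Axiom 5 is then \emph{vacuously} satisfied, because it is universally quantified over scales.

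For (iii), take $\widehat{\mathcal E(\Omega)}:=\mathcal C^\infty(\widehat{\mu}(\Omega),\widehat{\mathbb C})$ itself, equipped with the Colombeau embedding $E_\Omega$, integral $\widehat{I}$, and regular subalgebra $\mathcal R(\Omega)$ supplied by the consistency model of Theorem~\ref{T: Consistency}; all three objects continue to sit inside the enlarged ambient algebra, so Axioms 8, 9, 10 are inherited. Axiom 6 holds because a $\mathcal C^\infty$-function that is constant on the connected monad $\widehat{\mu}(\Omega)$ takes a single value in $\widehat{\mathbb C}$. Axiom 7 fails by the explicit counterexample in Remark~\ref{R: Warning}: the function equal to $1$ on the monad of a single $r\in\Omega$ and to $0$ elsewhere lies in $\mathcal C^\infty(\widehat{\mu}(\Omega),\widehat{\mathbb C})$, has identically vanishing gradient on the connected open set $\widehat{\mu}(\Omega)$, yet is non-constant---precluding the mean value theorem.

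The main obstacle, in my view, will be verifying for (iii) that the regularity relation $\mathcal R(\Omega)\cap E_\Omega[\mathcal D^\prime(\Omega)]=(E_\Omega\circ S_\Omega)[\mathcal E(\Omega)]$ survives the enlargement of the ambient algebra from the consistency-proof version of $\widehat{\mathcal E(\Omega)}$ to the full algebra $\mathcal C^\infty(\widehat{\mu}(\Omega),\widehat{\mathbb C})$. This should in fact be automatic: the sets $E_\Omega[\mathcal D^\prime(\Omega)]$ and $\mathcal R(\Omega)$ are unchanged as sets of functions, so their intersection is computed entirely inside the old smaller algebra, where it equals $(E_\Omega\circ S_\Omega)[\mathcal E(\Omega)]$ by Theorem~\ref{T: Consistency}. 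A smaller technical point for (i) is to check that Cantor completeness of $^\rho\mathbb R$ at the required level $\mathfrak{c}^{+}$ (i.e.\ for families of size at most $\mathfrak{c}^{+}$) holds; this is a standard consequence of the chosen saturation of the underlying $^*\mathbb R$.
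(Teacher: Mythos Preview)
Your argument is correct, and for parts (ii) and (iii) it is essentially identical to the paper's: the paper also takes a $\mathfrak{c}^+$-saturated $^*\mathbb R$ for (ii) (noting that algebraic saturation precludes a scale, so Axiom~4 fails while Axiom~5 is vacuous) and the full algebra $\mathcal C^\infty(\widehat{\mu}(\Omega),\widehat{\mathbb C})$ for (iii), citing Remark~\ref{R: Warning} for the failure of Axiom~7.

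For part (i) you diverge in a harmless but more laborious direction. The paper violates Axiom~1 by going \emph{down} in cardinality: it takes $^*\mathbb R=\mathbb R^{\mathbb N}/\mathcal U$ for a free ultrafilter on $\mathbb N$, so that $\card({^\rho\mathbb R})=\mathfrak c\neq\mathfrak c^+$. You instead go \emph{up} to $\mathfrak c^{++}$. Both work, but the paper's choice is cleaner: since $\card({^\rho\mathbb R})=\mathfrak c=\aleph_0^+$ (under GCH), Cantor completeness need only be checked for \emph{countable} families, and this is automatic from the $\aleph_1$-saturation enjoyed by every countable ultrapower. Your route forces you to verify Cantor completeness at level $\mathfrak c^+$, which---as you note---requires the underlying $^*\mathbb R$ to be $\mathfrak c^{++}$-saturated; this is available but is exactly the ``smaller technical point'' you would rather not have to chase. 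So the paper's model buys simplicity, while yours buys nothing extra.
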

\begin{proof}
\begin{Pf-enum}
\item Let $^\rho\mathbb R$ be the Robinson field of asymptotic numbers generated from $^*\mathbb R=\mathbb R^\mathbb N/\mathcal U$, where $\mathcal U$ is a free ultrafilter on $\mathbb N$ (Lindstr\o m~\cite{tLin}). Then $^\rho\mathbb R$ satisfies Axioms 2-5, but not Axiom 1, because $\card(^\rho\mathbb R)= \frak{c}$ (not $\frak{c}^+$). 

\item Let  $^*\mathbb R=\mathbb R^{\mathbb R_+}/\mathcal U$ for some $\frak{c}^+$-good ultrafilter $\mathcal U$ on $\mathbb R_+$ (Cavalcante~\cite{rCav}). Then $^*\mathbb R$ satisfies Axioms 1-3 and Axiom 5, but not Axiom 4, because $^*\mathbb R$ is algebraically saturated (and thus it does not have a scale $s$). 

\item The algebra $\mathcal{C}^\infty(\widehat{\mu}(\Omega), \widehat{\mathbb{C}})$ (Definition~\ref{D: Cinfinity-Functions on a Monad}) satisfies Axioms 6, Axiom 8-10, but not Axiom 7 (see Remark~\ref{R: Warning}). 
\end{Pf-enum}
\end{proof}

\begin{remark} [A Memory of Hebe Biagioni] Notice that the axioms which define the field of scalars $\widehat{\mathbb R}$ and $\widehat{\mathbb C}$ (Axioms 1-5) do not involve the dimension of the domain $\Omega$ of the generalized functions in $\widehat{\mathcal E(\Omega)}$ (Axioms 6-10). This was not the case however, in the original Colombeau~\cite{jCol84a} construction, where the ring $\overline{\mathbb R}(\Omega)$ of generalized scalars was defined as a particular subring of the ring $\mathcal G(\Omega)$ of generalized functions, and thus depends on the dimension of the domain $\Omega\subseteq\mathbb R^d$. This reminds me of our dear colleague and friend Hebe Biagioni~\cite{aBiag90}, who more than 20 years ago, modified the original Colombeau definition of $\mathcal G(\Omega)$ to achieve independence of the ring of generalized scalars from the dimension $d$ (as it should be with any set of scalars).
\end{remark}
\textbf{Acknowledgement:} This article evolved from talks presented in the summer of 2013, first in the Bulgarian Academy of Sciences in Sofia for theoretical physicists, and later in the University of Vienna (Austria) at a workshop on \emph{Non-Archimedean Geometry and Analysis}, September 2-6, 2013. The author thanks the organizers: Ivan Todorov (Sofia) and Paolo Giordano and Michael K\"{u}nzinger (Vienna) and all participants, for the invitation and stimulating discussion.  


\end{document}